\documentclass[10pt]{amsart}
\usepackage[latin1]{inputenc}
\usepackage{epsfig}
\usepackage{color}
\usepackage[british,english]{babel}
\usepackage{amsthm}
\usepackage{amsmath}
\usepackage{amsfonts}
\usepackage{amssymb}
\usepackage{graphicx}
\textheight 21cm 
\textwidth 21cm
\addtolength{\oddsidemargin}{-1.1cm}
\addtolength{\evensidemargin}{-1.2cm}
\addtolength{\topmargin}{-1.3cm}
\parindent 0pt

\setlength{\topmargin}{-0.0in} \setlength{\oddsidemargin}{-0.0in}
\setlength{\textheight}{8.75in} \setlength{\textwidth}{5.75in}

\parskip 1.2ex plus 0.5ex minus 0.5ex
\newtheorem{corollary}{Corollary}[section]

\newtheorem{Definition}[corollary]{Definition}

\newtheorem{lemma}[corollary]{Lemma}
\newtheorem{prp}[corollary]{Proposition}
\newtheorem{remark}[corollary]{Remark}
\newtheorem{thm}[corollary]{Theorem}

\newtheorem{asmp}[corollary]{Assumption}
\newfont{\sBlackboard}{msbm10 scaled 900}

\newcommand{\mylabel}[1]{\label{#1}
            \ifx\undefined\stillediting
            \else \fbox{$#1$}\fi }
\newcommand{\BE}{\begin{equation}}

\newcommand{\EEQ}{\end{equation}}
\newcommand{\rfb}[1]{\mbox{\rm
   (\ref{#1})}\ifx\undefined\stillediting\else:\fbox{$#1$}\fi}

\newfont{\Blackboard}{msbm10 scaled 1200}

\newfont{\roma}{cmr10 scaled 1200}

\def\CC{\rm \hbox{C\kern-.56em\raise.4ex
         \hbox{$\scriptscriptstyle |$}\kern+0.5 em }}


%

%

%
\newcommand{\mm}    {{\hbox{\hskip 0.5pt}}}

\newcommand{\bluff} {{\hbox{\raise 15pt \hbox{\mm}}}}
%

%

%

%

%


%

\begin{document}
\thispagestyle{empty}
\title[Uniform stabilization for the semi-linear wave equation]{Uniform stabilization for the semi-linear wave equation with nonlinear Kelvin-Voigt damping}
\author{Ka\"{\i}s Ammari}
\address{LR Analysis and Control of PDEs, LR 22ES03, Department of Mathematics, Faculty of Sciences of Monastir, University of Monastir, Tunisia}
\email{kais.ammari@fsm.rnu.tn}

\author{Marcelo M. Cavalcanti}
\address{Department of Mathematics, Universidade Estadual de Maring\`a,
Maring\`a, PR87020-900, Brazil}
\email{mmcavalcanti@uem.br}

\author{Sabeur Mansouri}
\address{LR Analysis and Control of PDEs, LR 22ES03, Department of Mathematics, Faculty of Sciences of Monastir, University of Monastir, Tunisia}
\email{m.sabeur1@gmail.com} 

\begin{abstract} 
This paper is concerned with the decay estimate of solutions to the semilinear wave equation subject to two localized dampings in a bounded domain. The first one is of the nonlinear Kelvin-Voigt type and is distributed around a neighborhood of the boundary according to the Geometric Control Condition. While the second one is a frictional damping and we consider it hurting the geometric condition of control. We show uniform decay rate results of the corresponding energy for all initial data taken in bounded sets of finite energy phase-space. The proof is based on obtaining an observability inequality which combines unique continuation properties and the tools of the Microlocal Analysis Theory.

\end{abstract}

\subjclass[2010]{35B40, 35L20, 37L71, 37L15}
\keywords{uniform stabilization, semilinear wave equation, nonlinear Kelvin-Voigt damping, viscoelatic feedback}

\maketitle

\tableofcontents

\makeatletter
\def\section{\@startsection {section}{1}{\z@}{-3.5ex plus -1ex minus
    -.2ex}{2.3ex plus .2ex}{\large\bf}}
\makeatother
%
\def\be{\begin{equation}}
\def\ee{\end{equation}}
 \def\pd{\partial}
\def\ds{\displaystyle}
 
\section{Introduction}
\subsection{Description of the problem}
In this paper, we study the analysis of the uniform decay rates of solutions to the wave equation subject to a non-linear Kelvin-Voigt damping. We consider the following problem
\begin{equation}\label{1}
 \left\{\begin{array}{lll}
       \partial_t^2u-\Delta u-\mathop{\rm div} \big( a(x)g(\nabla\partial_tu)\big)+\eta(x)\partial_tu+f(u)=0,\;\;&\mbox{in}& \;\; \Omega \times (0,\infty),\\
			
                    u =0, &\mbox{on}&\partial\Omega\times (0,\infty),\\
                       
 \displaystyle  u(x,0)=u_{0}(x),\;\;\;\partial_tu(x,0)=u_{1}(x), \;\;&\mbox{in}& \;\; \Omega,
                   
 \end{array}\right.
 \end{equation} 
where $\Omega$ is a bounded domain of $\mathbb{R}^n$, $n \geq 1$, with a smooth boundary $\partial \Omega$. We denote by $\omega$ the intersection of the domain $\Omega$ with a neighborhood of $\partial \Omega$ in $\mathbb{R}^n$ and we call it a neighborhood of the boundary of $\Omega$. Therewith, $f$ and $g$ are real valued functions satisfying Assumption \ref{a1. 1} and Assumption \ref{a1.2} below.
\begin{asmp}\label{a1. 1} $f:\mathbb{R}\rightarrow \mathbb{R}$ is a $C^2(\mathbb{R})$ function which is assumed to grow sub-critically satisfying the sign condition $f(s)s \geq 0$, $\forall s \in \mathbb{R}$, and, furthermore:
\begin{equation}\label{2}
f(0)=0,\;\;\; |f^j(s)|\leq k_0(1+|s|)^{p-j},\;\; j=1,2,\;\; \forall s\in \mathbb{R},
\end{equation}
which implies, in particular, that for some $C > 0$,
\begin{equation}\label{3}
|f(s_1)-f(s_2)|\leq C(1+|s_1|^{p-1}+|s_2|^{p-1})|s_1-s_2|,\;\;\forall s_1,\;s_2\in \mathbb{R},
\end{equation}
with
\begin{equation}\label{4}
1\leq p<\frac{n}{n-2} \; \mbox{ if } \; n\geq 3 \;\mbox{ or }\; p\geq 1\;\mbox{ if }\; n=1,2.
\end{equation}
In addition, 
\begin{equation}\label{5}
0\leq F(s)\leq f(s)s,\;\;\mbox{ for all }\;\; s\in \mathbb{R},
\end{equation}
where $F(s):=\int_0^sf(r)\;dr$.
\end{asmp}
\begin{asmp}\label{a1.2}
The feedback function 
$$ g:\mathbb{R}^n\longrightarrow \mathbb{R}^n $$
$$ s\longmapsto [g_i(s_i)]_{i=1,...,n}, $$
where, for all $i=1,...,n$, the component $g_i:\mathbb{R}\longrightarrow \mathbb{R}$ is a function such that 
\begin{equation}\label{6}
 \left\{\begin{array}{lll}
g_i \mbox{ is continuous, monotone increasing on } \mathbb{R} \mbox{ and } g_i(0)=0,\\
g_i(s_i)s_i>0,\;\;\forall s\neq 0,\\
ms_i^2\leq g_i(s_i)s_i\leq M s_i^2 \mbox{ for } |s_i|\geq 1, \mbox{ for some } m,\;M; 0<m\leq M.
 \end{array}\right.
\end{equation}
\end{asmp}
The functions $a(\cdot)$ and $\eta(\cdot)$ responsible for the localized dissipative effects are smooth functions satisfying:
\begin{asmp}\label{a1.3}
$a(\cdot) \in L^\infty(\Omega) $ is a non-negative function. Further, there exists a compact, connected subset $A\subset \Omega$ with smooth boundary and no-empty interior such that
$$ A:=\left\{x\in \Omega:\; a(x)=0\right\}. $$
We also suppose that $a(\cdot)\in C^0(\overline{\omega})$, where $\omega:=\Omega/A.$
\end{asmp}
\begin{asmp}\label{a1.6}
The frictional damping $\eta\in C^0(\overline{\Omega})$ is a nonnegative function and is effective  
in a neighbourhood of the boundary of the set $A=\left\{x\in \Omega:\; a(x)=0\right\}$, that is, 
$$ \eta(x) \geq \eta_0 >0$$
in $\mathcal{O}_\varepsilon=\left\{x\in \Omega:\; d(x,y)<\varepsilon,\;y\in \partial A\right\}$ for $\varepsilon>0$ small enough. 
\end{asmp}
Let us assume that the following assumptions are also made:
\begin{asmp}\label{a1.4}
$\omega$ is a subset of $\Omega$ which controls geometrically $\Omega$, i.e. there exists $T_0>0$ such that every ray of the geometric
optics enters $\omega$ in a time $t<T_0$.
\end{asmp}
\begin{asmp}\label{a1.5}
For every $T>0$, the unique solution 
$$u\in C(0,T;L^2(\Omega))\cap C^1(0,T;H^{-1}(\Omega)), $$
that satisfies
\begin{equation}
 \left\{\begin{array}{lll}
       \partial_t^2u-\Delta u+V(x,t)u=0\;\;&\mbox{in}& \;\; \Omega \times (0,\infty),\\
                    u =0 \;\;&\mbox{in}& \;\; \omega\times (0,\infty),

 \end{array}\right.
 \end{equation}
where $V(x,t) \in L^\infty \left(0,T;L^\infty(\Omega)\right)$, is the trivial one.
\end{asmp}
\subsection{Previous results and methodology}
The stabilization of the wave equation have been extensively investigated in literature by many authors. Some of them considered 
linear or nonlinear viscous damping of the form $a u_t$ or $ag(u_t)$ in different setting. To name few, we mention the following papers \cite{LT9,LT10,LT11,LT12,LT13,LT14,LT15,LT16,LT17,LT18,LT19,LT20,LT21} and many references therein. In \cite{LT9}, E. Zuazua proved that the energy of the linear wave equation decays exponentially if the damping region, where $a(x)\geq a_0>0$, contains a neighbourhood of $\partial \Omega$ or at least a neighbourhood of $\Gamma(x_0)=\left\{x\in \partial \Omega,\;(x-x_0)\cdot \nu(x)\geq 0\right\}$ where $\nu$ is the outward unit normal to $\Omega$ and $x_0\in \mathbb{R}^n$. 

\medskip

Thereafter, Bardos et al. in \cite{LT6} established, using microlocal analysis, sharp sufficient conditions for the observation, control and stabilization of the same linear equation on a compact Riemannian manifold $(M, g)$ with boundary.

\medskip

In \cite{LT23}, using the piecewise multiplier method, P. Martinez weakened the usual geometrical condition on the localization of the damping and eliminated the polynomial growth of the feedback to prove the stability result with a precise decay rate estimates. Later, Cavalcanti et al. in \cite{LT24,LT13,LT14} studied the wave equation where a localized nonlinear damping is considered. In the first one, the authors consider compact surfaces, while the other two papers deal with compact manifolds. Thanks to a damping effective in the complement of a set satisfying a technical condition connected to the existence of multipliers, they were able to show uniform and optimal decay rates of the energy in \cite{LT13}, whereas in \cite{LT14}, the optimal decay rate was proved under arbitrary small control regions based on the construction of a special multiplier. In \cite{LT25}, Alabau-Boussouira considered a family of nonlinear feedbacks, varying from very weak nonlinear dissipation to polynomial or polynomial-logarithmic decaying feedbacks at the origin, for showing sharp or quasi-optimal upper energy decay rates. Moreover, Alabau-Boussouira and Ammari obtained in \cite{LT26} sharp-energy decay rates for the nonlinearly damped abstract infinite-dimensional systems,
combining optimal geometric conditions as provided by Bardos et al \cite{LT6} with an optimal weight convexity method of Alabau-Boussouira (see \cite{LT25} and \cite{LT27}). This approach has been later extended to uniform discretization of nonlinearly damped evolution equations, see \cite{LT11}.

\medskip

Furthermore, there are a numerous papers concerning the wave equation with the linear viscoelastic damping of Kelvin-Voigt type in a bounded domain. In \cite{LT28}, Liu an Liu investigated an one-dimensional linear wave equation with the Kelvin-Voigt damping is effective in a subinterval which models an elastic string with one segment made of viscoelastic material and the other of elastic material. Since the discontinuity in the damping coefficient and the high order of damping operator, the authors showed that the energy of that system does not decay exponentially. This was partially confirmed in \cite{LT30} where the uniform stability of an one-dimensional wave equation is obtained  for local viscoelastic damping with differentiable coefficients, see also \cite{KLS}. In the same spirit, a result of uniform stability is established in \cite{LT31} in the higher dimensional case which present a more delicate problem due to the unboundedness of viscoelastic damping and the lack of regularity of solution. To overtake this difficulties the authors considered a special case where the viscoelastic damping is localized in a neighborhood of the whole of the boundary under some conditions. It also worth mentioning that this geometrical condition has been treated in \cite{LT34,KH} by considering a singular Kelvin-Voigt damping which is localized far away from the boundary. In order to deal with such a situation, Tebou has proposed a constructive frequency approach to improve this result. He has weakened the hypothesis on the damping coefficients and especially on the feedback control region, see \cite{LT32}. More recently, Tebou et al. studied in \cite{LT16} the wave equation with two types of linear locally distributed damping mechanisms: a frictional damping and a Kelvin-Voigt type
damping.  Using a combination of the multiplier techniques and the frequency domain approach, they established that a convenient interaction of
the two damping mechanisms is powerful enough for the exponential stability of the dynamical system, provided that the coefficient of the Kelvin-Voigt damping is smooth enough and satisfies a structural condition. This latter result has been generalized in \cite{LT35} to the semilinear case in an inhomogeneous medium subject to a Kelvin-Voigt type damping acting in a neighborhood of the boundary or in a mesh totally distributed in the domain with measure arbitrarily small. 

\medskip

Motivated by all these works, the main concern of this paper is to consider a wave equation, in the higher dimensional case, subject to a nonlinear locally distributed Kelvin-Voigt damping to which we associate a linear frictional one. What makes this problem interesting is the nature of the operator defining the damping. Firstly, this type of damping generates a nonlinear and unbounded operator. It is also of great interest to point that the domain consists of two different materials, that is, there is an interaction between the elastic component (the portion of domain where $a\equiv 0$) and the nonlinear viscoelastic component (the portion of domain where $a> 0$). Further, the solution of the corresponding dynamical system does not have the requisite smoothness to apply the usual multiplier method. All these points makes the associated stabilization problem more difficult to study, especially in the multidimensional setting. In turn, to the authors' best knowledge, the wave equation subject to nonlinear locally distributed Kelvin-Voigt damping was not consider in literature (and a fortiori in the above references and the references therein). Finally, it is important to remark that the problem under consideration generalize many works in the literature, in particular, the papers mentioned above.

\medskip

To obtain the desired result, our strategy is based on microlocal analysis techniques combined with unique continuation properties to establish an observability inequality. In the following, we will give a briefly explication for this strategy. 
The energy associated to problem \eqref{1} is given by
\begin{equation}
\label{7}
E_u(t):=\frac{1}{2}\int_\Omega\left\{ |\partial_tu(x,t)|^2+|\nabla u(x,t)|^2\right\}\;dx+\int_\Omega F(u(x,t))\;dx,
\end{equation}
which satisfies the following identity 
$$E_u(t)+\sum_{i=1}^n\int_{0}^{t}\int_\Omega a(x)g_i(\partial_{x_i} \partial_tu(x,s))\partial_{x_i} \partial_tu(x,s)\;dxds
$$
\begin{equation}\label{771}
+\int_{0}^{t}\int_\Omega\eta(x)|\partial_tu(x,s)|^2\;dxds=E_u(0),\;\;\mbox{ for all } t\in [0,+\infty).
\end{equation}
The main objective of the present paper is to prove the existence and uniqueness for weak solutions to problem (1.1) and, in addition, that those solutions decay uniformly to zero, that is, 
$$E_u(t)\leq S\left(\frac{t}{T}-1\right),\;\; \mbox{ for all } t>T_0,$$
where the contraction semigroup $S(t)$ is the solution of the differential equation 
$$ \frac{d}{dt}S(t)+\mathcal{R}(S(t))=0,\;\;\;S(0)=E(0), $$
where $\mathcal{R}$ is given in \eqref{17'''}, for all mild solutions to problem \eqref{1} provided that the initial data are taken in bounded sets of the corresponding energy space. This result is a local stabilization result. Inspired by Dehman, G\'erard, Lebeau \cite{LT36} or Dehman, Lebeau and Zuazua \cite{LT18}, we give a direct proof of the inverse inequality to problem \eqref{1} namely, we prove that there exist $T>0$ and a positive constant $C=C(T)$ such that
\begin{equation}\label{772}
E_u(0)\leq C\left( \int_0^T\int_\Omega a(x)\left(|\nabla \partial_t u|^2 +|g(\nabla \partial_t u)|^2 \right)dxdt+ \int_0^T\int_\Omega\eta(x)|\partial_t u|^2\;dxdt \right),
\end{equation}
provided the initial data are taken in bounded sets of the corresponding energy space and the nonlinarity $f$ is taken as in the Assumption \ref{a1. 1}. 

\medskip

Indeed, for a given $\delta>0$ we denote by $\mathcal{O}_\delta$ the neighbourhood of $\partial A$ given by 
$$ \mathcal{O}_\delta=\left\{ x\in \Omega:\;d(x,y)<\delta,\;y\in \partial A \right\}. $$
First of all, it is worth mentioning that the basic damping to prove the desired uniform result is the nonlinear viscoelstic one. Indeed, note that if $a(\cdot)$ vanish in the whole of the domain $\Omega$, the frictional damping $\eta(\cdot)$ is not strong enough to provide the exponential and uniform decay of the energy since in this case the geometric control condition GCC is violated, for instance see \cite{LT6} or also \cite{LT12}. However, the damping $\eta(\cdot)$ plays an important role since the lack of convergence due to the Kelvin-voigt damping which will be explained next.
To prove \eqref{772} and therefore the stability result, we will argue by contradiction and we will obtain a sequence $\left\{v_k\right\}_{k\in\mathbb{N}}$ of solutions to normalized problem associated to \eqref{1} such that $E_{v_k}(0)=1$. To do so, we have to consider $\mathcal{V}=\left\{ x\in A:\; d(x,y)>\varepsilon/2,\;y\in \partial A \right\}$. In order to obtain a contradiction we need to prove that $E_{v_k}(0)\rightarrow 0$ as $k\rightarrow +\infty$. We shall show it, by exploiting the properties of the function $a(\cdot)$, $\eta(\cdot)$, $f$ and $F$,  and a unique continuation principle, that 
\begin{equation}\label{773}
\int_0^T\int_{\Omega\backslash\mathcal{V}} | \partial_t v_k|^2 \;dxdt\rightarrow 0 ,\; \mbox{ as } m \rightarrow +\infty.
\end{equation}
Next we are going to propagate this convergence of $\partial_tv_k$ to $0$ from $L^2((\Omega/\mathcal{V}) \times (0,T))$ to the whole space $L^2(\Omega \times (0,T))$. For that, we will use microlocal analysis arguments. Indeed, let us consider $\chi$ be the microlocal defect measure associated with $\left\{\partial_tv_k\right\}_k$ in $L^2_{loc}(\Omega\times(0,T))$. We are going to establish the convergence 
\begin{equation}\label{774}
D(\partial_tv_k):=(\partial^2_t-\Delta)\partial_tv_k \rightarrow 0, \mbox{ strongly in } H^{-2}_{loc}(\Omega\times(0,T)) \mbox{ as } k\rightarrow \infty.
\end{equation}
In view of the convergence \eqref{774}, we can deduce that the $\chi$ is contained in the characteristic set of the wave equation. But, this convergence is not enough to obtain the desired propagation. Indeed, we need the stronger convergence 
\begin{equation}\label{775}
(\partial^2_t-\Delta)\partial_tv_k \rightarrow 0, \mbox{ strongly in } H^{-1}_{loc}(\Omega\times(0,T)) \mbox{ as } k\rightarrow \infty.
\end{equation}
This problem of regularity due to the following weak convergence:
\begin{equation}
\partial_t\left(\mathop{\rm div} \left[ \frac{1}{\sigma_k}a(x)g(\nabla\partial_tu_k)\right]\right) \rightarrow 0, \mbox{ strongly in } H^{-2}_{loc}(\Omega\times(0,T)) \mbox{ as } k\rightarrow \infty,
\end{equation}
where $\sigma_k = \sqrt{E_{u_k}(0)}, u_k = \sigma_k \, v_k.$

Thus, the presence of the frictional damping in the neighborhood $\mathcal{O}_\epsilon$ of the boundary of $\partial A $ will play a basic role. In fact, we have $\mathop{\rm div} \left( \frac{1}{\sigma_k}a(x)g(\nabla\partial_tu_k)\right)=0$ in $A\times (0,T)$ and, thus, we find that
\begin{equation}
D\partial_tv_k=\partial_t\left(-\eta(x)\partial_tv_k-\frac{1}{\sigma_k}f(u_k)\right)\rightarrow 0\mbox{ in } H^{-1}_{loc}(\mbox{int}(A)\times(0,T)) \mbox{ as } k\rightarrow \infty,
\end{equation}
which implies that $\chi$ propagates along the bicharacteristic flow of the operator $D(\cdot)$. Particularly, if there is $\mu_0=(t_0,x_0,\tau_0,\xi_0)$ does not belong to the supp$(\chi)$, the whole bicharacteristic issued from $\mu_0$ is out of supp$(\chi)$.
Further, since supp$\chi\subset \mathcal{V}\times (0,T)$, and the frictional damping effects in both sides of the boundary $\partial A$, we can propagate the kinetic energy from $(\mathcal{O}_{\epsilon/2}\cap A)\times (0,T)$ towards the set $\mathcal{V}\times (0,T)$.

\medskip

Then, we find supp$(\chi)=\emptyset$ and $\partial_tv_k\rightarrow 0$ in $L^2_{loc}(\Omega\times(0,T))$. Therewith, since some computations, we can obtain 
\begin{equation}
\int_0^T\int_\Omega|\partial_tv_k(x,t)|^2\;dxdt\rightarrow 0,\mbox{ as } k\rightarrow \infty.
\end{equation}
Finally, this convergence combined with an argument of equipartition of energy, we can conclude the desired contradiction $E_{v_k}(0)\rightarrow 0$.

\medskip 

The rest of the paper is organized as follows: section 2 is devoted to the well-posedness of the problem \eqref{1}. In section 3, we will study the uniform stability of our problem subject to strong damping. We prove the nonlinear observability inequality associated to problem  \eqref{1} by considering refined arguments of microlocal analysis. An appendix is added as a last section.
\section{Well-posedness of the problem}
To start the well-posedness study for considered problem, we recall the energy space
$$ \mathcal{H}=H^1_0(\Omega)\times L^2(\Omega) $$
which is endowed with the inner product
$$ \left\langle (u_1,v_1),(u_2,v_2)\right\rangle_\mathcal{H}=\int_\Omega \left\{ \nabla u_1 \nabla u_2 + v_1 v_2 \right\}\;dx.$$
Denoting $v=u_t$, we can rewrite the problem \eqref{1} in $\mathcal{H}$ as the following abstract Cauchy problem 
\begin{equation}\label{8}
 \left\{\begin{array}{lll}
\frac{\partial}{\partial t}(u,v)+\mathcal{A}(u,v)=\mathcal{F}(u,v),\\
(u,v)(0)=(u_0,u_1),
\end{array}\right.
 \end{equation}
where the unbounded operator $\mathcal{A}:D(\mathcal{A}) \subset \mathcal{H} \rightarrow \mathcal{H}$ is given by
\begin{equation}\label{9}
\mathcal{A}(u,v)=\big(v,\mathop{\rm div}\big(\nabla u+ a(x)g(\nabla v)\big)+\eta(x)v\big),
\end{equation}
with domain 
\begin{equation}\label{10}
D(\mathcal{A})=\left\{ (u,v)\in \mathcal{H}: \; v\in H^1_0(\Omega),\;\; \mathop{\rm div}\big(\nabla u+ a(x)g(\nabla v)\big)\in L^2(\Omega)\right\},
\end{equation}
and $\mathcal{F}:\mathcal{H}\rightarrow \mathcal{H}$ is the operator 
\begin{equation}\label{a11}
\mathcal{F}(u,v)=(0,-f(u)).
\end{equation}
The first point is to prove that the operator $\mathcal{A}:D(\mathcal{A})\subset \mathcal{H}\rightarrow \mathcal{H}$ defined above generates a $C_0$-semigroup of contractions $e^{t\mathcal{A}}$ on the energy space $\mathcal{H}$ and the domain $D(\mathcal{A})$ is dense in $\mathcal{H}$.

Consider the operators $A$ and $B$ defined as follows
$$A:D(A) = H_0^1(\Omega)\subset H^{-1}(\Omega) \rightarrow H^{-1}(\Omega),$$
$$ u \mapsto Au=-\Delta u,$$
and
$$ B:H^1_0(\Omega) \rightarrow H^{-1}(\Omega),  $$
$$ v\mapsto Bv=-\mathop{\rm div} \big( a(x)g(\nabla v) \big)+\eta(x)v.  $$
It is easy to verify that $A$ is a maximal monotone operator. In other hand, we have
 $$\begin{array}{lll}
&&\left\langle Bu-Bv,u-v\right\rangle_{H^{-1}(\Omega),H_0^1(\Omega)}\\&=&\displaystyle\int_\Omega a(x)[g(\nabla u)-g(\nabla v)](\nabla u-\nabla v)\;dx+\int_\Omega\eta(x)|u-v|^2\;dx \\
&=&\displaystyle \sum_{i=1}^n \int_\Omega a(x)[g_i(\partial_{x_i}u)-g_i(\partial_{x_i}v)](\partial_{x_i}u-\partial_{x_i}v)+\int_\Omega\eta(x)|u-v|^2\;dx.
\end{array}
$$
Since the fact that $a$ and $\eta$ are non-negative functions and every component $g_i$ of $g$ is monotonic by hypothesis, we obtain that $B$ is monotone.

\medskip

We claim also that $B$ is hemicontinuous. In fact, consider $u$ and $v$ in $D(B)$ and $t_q \rightarrow 0$ when $q \rightarrow +\infty$, then, for all $w\in H_0^1(\Omega)$, we have 
\begin{equation}\label{12}
\begin{array}{lll}
&&\displaystyle\lim_{q\rightarrow \infty}\left\langle B(u+t_q v),w\right\rangle_{H^{-1}(\Omega),H_0^1(\Omega)}\\
&=&\displaystyle\lim_{q\rightarrow \infty}\int_\Omega a(x)g( \nabla u+t_q \nabla v) \nabla w\;dx +\int_\Omega \eta(x)(u+t_qv)w\;dx\\
&=&\displaystyle \sum_{i=1}^n \lim_{q\rightarrow \infty}  \int_\Omega a(x)g_i(\partial_{x_i}u+t_q\partial_{x_i}v)\partial_{x_i}w \;dx+\int_\Omega \eta(x)uw\;dx.
\end{array}
\end{equation}
Let $\psi_q^i(x)=a(x)g_i(\partial_{x_i}u+t_q\partial_{x_i}v)\partial_{x_i}w$. Taking in mind \eqref{6}, if $|\partial_{x_i}u+t_q\partial_{x_i}v|\geq 1 $, we have 
$$\begin{array}{lll}
|\psi_q^i(x)|&=& a(x)|g_i(\partial_{x_i}u(x)+t_q\partial_{x_i}v(x))||\partial_{x_i}w(x)|\\
&\leq& K \left\|a\right\|_\infty |\partial_{x_i}u(x)+t_q\partial_{x_i}v(x)||\partial_{x_i}w(x)|\\
&\leq& c_1 |\partial_{x_i}u(x)||\partial_{x_i}w(x)| + c_2 |\partial_{x_i}v(x)| |\partial_{x_i}w(x)|
\end{array}
$$
almost everywhere in $\Omega$, where $c_1$ and $c_2$ are two constants. The second constant result of the convergence of $(t_q)$.
Then, for all $1 \leq i \leq n, \psi_q^i \in L^1(\Omega)$ since $\partial_{x_i}u(x),\;\partial_{x_i}v(x),\;\partial_{x_i}w(x) \in L^2(\Omega).$

\medskip

In the case where $|\partial_{x_i}u+t_q\partial_{x_i}v|\leq 1 $, using the continuity of $g_i$, we obtain that $\psi_q^i \in L^1(\Omega)$ in the same way.\\
Using once more the continuity of $g_i$ and thanks to Lebesgue's dominated convergence theorem, we find 
$$ \lim_{q\rightarrow \infty} \int_\Omega a(x)g_i(\partial_{x_i}u(x)+t_q\partial_{x_i}v(x))\partial_{x_i}w(x) \;dx= \int_\Omega a(x)g_i(\partial_{x_i}u(x))\partial_{x_i}w(x) \;dx.$$
Moreover, we have
\begin{equation}\label{13}
\begin{array}{lll}
\displaystyle \sum_{i=1}^n \lim_{q\rightarrow \infty}  \int_\Omega a(x)g_i(\partial_{x_i}u+t_q\partial_{x_i}v)\partial_{x_i}w \;dx&=&
\displaystyle \displaystyle \sum_{i=1}^n\int_\Omega a(x)g_i(\partial_{x_i}u(x))\partial_{x_i}w(x) \;dx\\
&=& \displaystyle \int_\Omega a(x)g \big(\nabla u\big) \nabla w\;dx.
\end{array}
\end{equation}
Thus, \eqref{12} and \eqref{13} yields
$$ \lim_{q\rightarrow \infty}\left\langle B(u+t_q v),w\right\rangle_{H^{-1}(\Omega),H_0^1(\Omega)}=\left\langle Bu,w\right\rangle_{H^{-1}(\Omega),H_0^1(\Omega)}, $$
which gives that $B$ is hemicontinuous.

\medskip

Using \eqref{6}, we get 
\begin{equation}\label{14}
\begin{array}{lll}
\displaystyle \left\langle u+Au+Bu,u\right\rangle_{H^{-1}(\Omega),H_0^1(\Omega)}= \\
\displaystyle \left\langle u,u\right\rangle_{H^{-1}(\Omega),H_0^1(\Omega)}+\displaystyle \left\langle Au,u\right\rangle_{H^{-1}(\Omega),H_0^1(\Omega)}+\displaystyle \left\langle Bu,u\right\rangle_{H^{-1}(\Omega),H_0^1(\Omega)}\\
\geq  \displaystyle |u|^2+ \int_\Omega |\nabla u|^2\;dx \geq \left\|u\right\|_{H^1_0(\Omega)}^2,
\end{array}
\end{equation}
therefore, we have that $I+A+B$ is coercive.
Now, we rewrite the operator $\mathcal{A}:D(\mathcal{A})\subset \mathcal{H}\rightarrow \mathcal{H}$ as follows
$$ \mathcal{A} \left(\begin{array}{lll} u\\h \end{array}\right)=\left(\begin{array}{lll} -h\\Au+Bh \end{array}\right),$$
and 
$$ D(\mathcal{A})=\left\{ (u,h)\in \mathcal{H};\;\; h\in H^1_0(\Omega),\;\; Au+Bh\in L^2(\Omega) \right\}. $$
Then, we have to prove that $\mathcal{A}$ is maximal monotone in $\mathcal{H}$. Indeed, $\mathcal{A}$ is monotone, since
$$
\begin{array}{lll}
&&\displaystyle\left( \mathcal{A}\left[\begin{array}{lll} u_1\\h_1 \end{array}\right] - \mathcal{A}\left[\begin{array}{lll} u_2\\h_2 \end{array}\right], \left[\begin{array}{lll} u_1\\h_1 \end{array}\right]-\left[\begin{array}{lll} u_2\\h_2 \end{array}\right] \right)_\mathcal{H}\\
&=&\displaystyle\left(\left[\begin{array}{lll} -h_1+h_2 \\ A(u_1-u_2)+Bh_1-Bh_2\end{array}\right],\left[\begin{array}{lll} u_1-u_2 \\ h_1-h_2\end{array}\right]\right)_\mathcal{H}\\
&=& -(h_1-h_2,u_1-u_2)_{H^1_0(\Omega)}+(A(u_1-u_2),h_1-h_2)_{L^2(\Omega)}+(Bh_1-Bh_2,h_1-h_2)_{L^2(\Omega)}\\
&=&(Bh_1-Bh_2,h_1-h_2)_{L^2(\Omega)} \geq 0,
\end{array}$$
because $B$ is monotone.

\medskip

To show the maximality of $\mathcal{A}$, we are going to show that $R(I+\mathcal{A})=\mathcal{H}$. To do so, let $(u_0,h_0)\in \mathcal{H}$ and we look for an element $(u,h)\in D(\mathcal{A})$ such that 
\begin{equation}\label{15bis}
 (I+\mathcal{A}) \left[\begin{array}{lll} u\\h \end{array}\right]=\left[\begin{array}{lll} u_0\\h_0 \end{array}\right].
\end{equation}
Equivalently, we consider the following system
 $$
\left\{\begin{array}{lll}
      u-h=u_0,\\
			
      h+Au+Bh=h_0.
 \end{array}\right.
$$
Combining the above identities we deduce in the weak space
\begin{equation}\label{16}
h+Ah+Bh=h_0-Au_0.
\end{equation}
Therefore, it sufficient to show that $I+A+B$ is maximal monotone. For that, we have to prove that $R(I+A+B)=H^{-1}(\Omega)$. Since $B$ is monotone and hemicontinuous, $I+A$ is maximal monotone and $I+A+B$ is coercive, we obtain thanks to Corollary 1.3 in \cite{barbu} that $I+A+B$ is maximal monotone which implies that problem \eqref{16} admits a unique solution $h\in H^1_0(\Omega)$. Since $u=u_0+h$ and $Au+Bh=u_0-h$, we obtain that $u\in H^1_0(\Omega)$ and $Au+Bh \in L^2(\Omega)$.

\medskip

Thus, the problem \eqref{15bis} has a unique solution $(u,h)\in D(\mathcal{A})$, and, consequently, $\mathcal{A}$ is maximal monotone in $\mathcal{H}$.\\
Moreover, the nonlinear operator $\mathcal{F}:\mathcal{H}\rightarrow \mathcal{H}$ given in \eqref{a11} is a locally Lipschitz continuous operator. Indeed, given a bounded set $\mathcal{S}$ in $\mathcal{H}$ and $(u_1,u_2)$, $(v_1,v_2)$ in $\mathcal{S}.$ Then, using H\"older's inequality, the fact that $H^1$ is continuously embedded in $L^{2q}(\Omega)$ for $2q=\frac{22^*}{2^*-2(p-1)}\leq 2^*$ and by Assumption \eqref{3}, we get
$$ \begin{array}{lll}
\left\|\mathcal{F}(u_1,u_2)-\mathcal{F}(v_1,v_2)\right\|_\mathcal{H}^2 &\leq& \left\|f(u_1)-f(v_1)\right\|_{L^2(\Omega)}\\
&\leq& C \left( 1+\left\|u_1\right\|^{2(p-1)}_{L^{2^*}}+\left\|v_1\right\|^{2(p-1)}_{L^{2^*}} \right)\left\|u_1-v_1\right\|^2_{L^{2q}}\\
&\leq& C\left\|u_1-v_1\right\|^2_{H^1}\\
&\leq& C \left\|(u_1-u_2)-(v_1-v_2)\right\|^2_{\mathcal{H}}.
\end{array}$$
Therefore, thanks to Theorems 6.1.4 and 6.1.5 in Pazy's book \cite{LT4}, see also Theorem 7.2 in \cite{LT5}, the problem \eqref{8} has a unique mild solution on the interval $[0,T_{\max})$. This solution is given by 
$$ (u(t),v(t))=e^{t\mathcal{A}}(u_0,u_1)+\int_0^te^{(t-s)\mathcal{A}}\mathcal{F}(u(s),v(s))\;ds,\;\;\; \forall t\in [0,T_{\max}). $$
Furthermore, if $(u_0,u_1)\in D(\mathcal{A})$ the solution is regular. We claim that $T_{\max}=\infty$.

\medskip

Indeed, by a derivative of the energy functional defined in \eqref{7}, it follows that 
\begin{equation}\label{17}
 \frac{dE_u}{dt} (t)=-\int_\Omega a(x)g(\nabla \partial_tu)\nabla \partial_tu+\eta(x)|\partial_tu|^2\;dx, 
\end{equation}
which shows that the energy $E_u(t)$ is non-increasing and thus $E_u(t)\leq E_u(0)$ for all $t\in [0,T_{\max})$. Therefore, taking \eqref{5} in mind, we have 
$$ E_u(t)\geq \frac{1}{2}\int_\Omega |\partial_tu(x,t)|^2\;dx+ \frac{1}{2}\int_\Omega |\nabla u(x,t)|\;dx = \frac{1}{2} \left\|(u,u_t)\right\|_\mathcal{H}^2. $$ 
Therefore
$$ \frac{1}{2} \left\|(u,u_t)\right\|_\mathcal{H}^2 \leq E_u(t) \leq E_u(0),\;\;\; \forall t\in [0,T_{\max}),$$
and, thus, the local solutions cannot blow-up in finite time which gives that $T_{\max}=\infty.$
Finally, we have to give the following well-posedness theorem.
\begin{thm} \label{thm2.1}
Suppose that the conditions on the nonlinear term $f$ specified
in Assumption \ref{a1. 1} are satisfied in the case $1\leq p\leq \frac{n}{n-2}$ if $n\geq 3$, and the initial data $(u_0,u_1)$ is in the space $\mathcal{H}$. Then problem \eqref{8} admits a unique mild solution
$$ u\in C(0,+\infty;H_0^1(\Omega))\cap C(0,+\infty;L^2(\Omega)). $$
Further, if $(u_0,u_1)$ is in $D(\mathcal{A})$, then the solution is regular, i.e. $C(0,+\infty; H^2(\Omega) \cap H_0^1(\Omega))\cap C(0,+\infty;H^1_0(\Omega))$.
\end{thm}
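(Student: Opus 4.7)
The plan is to recast \eqref{1} as the abstract Cauchy problem \eqref{8} on the energy space $\mathcal{H}=H_0^1(\Omega)\times L^2(\Omega)$, with $\mathcal{A}$ and $\mathcal{F}$ defined in \eqref{9}--\eqref{a11}, and then apply the nonlinear semigroup theory from Pazy, which requires: (i) $-\mathcal{A}$ generates a $C_0$-semigroup of contractions (equivalently, $\mathcal{A}$ is maximal monotone and densely defined), (ii) $\mathcal{F}$ is locally Lipschitz on $\mathcal{H}$, and (iii) an a priori bound preventing blow-up, so that $T_{\max}=+\infty$.

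For step (i), I will decompose the second component of $\mathcal{A}$ as $Au+Bh$, with $A=-\Delta$ a maximal monotone operator from $H_0^1(\Omega)$ to $H^{-1}(\Omega)$, and $B:H_0^1(\Omega)\to H^{-1}(\Omega)$ defined by $Bv=-\mathop{\rm div}(a(x)g(\nabla v))+\eta(x)v$. Using Assumption \ref{a1.2} componentwise (monotonicity of each $g_i$) together with $a,\eta\ge 0$, monotonicity of $B$ is immediate. Hemicontinuity of $B$ follows by Lebesgue dominated convergence, using the linear growth of $g_i$ at infinity provided by \eqref{6} to produce an $L^1$-dominant for the integrand $a(x)g_i(\partial_{x_i}u+t_q\partial_{x_i}v)\partial_{x_i}w$. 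The pairing $\langle(I+A+B)u,u\rangle \geq \|u\|_{H_0^1}^2$ gives coercivity, so by the Br\'ezis--Barbu theorem (Corollary 1.3 in \cite{barbu}) applied to the sum of the maximal monotone $I+A$ and the monotone, hemicontinuous, bounded $B$, the operator $I+A+B$ is surjective onto $H^{-1}(\Omega)$. Solving the resolvent equation $(I+\mathcal{A})(u,h)=(u_0,h_0)$ then reduces to $h+Ah+Bh=h_0-Au_0\in H^{-1}(\Omega)$, which yields $h\in H_0^1(\Omega)$, and then $u=u_0+h\in H_0^1(\Omega)$ with $Au+Bh=u_0-h\in L^2(\Omega)$, proving $R(I+\mathcal{A})=\mathcal{H}$ and hence maximality; density of $D(\mathcal{A})$ follows from maximal monotonicity on the Hilbert space $\mathcal{H}$.

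For step (ii), the local Lipschitz estimate on $\mathcal{F}(u,v)=(0,-f(u))$ reduces to controlling $\|f(u_1)-f(v_1)\|_{L^2}$. I will use \eqref{3} with H\"older's inequality, balancing exponents by choosing $2q$ so that $2(p-1)\cdot 2^*/(2^*-2) = \ldots$ fits the subcritical range \eqref{4}. The embeddings $H^1(\Omega)\hookrightarrow L^{2^*}(\Omega)$ and $H^1(\Omega)\hookrightarrow L^{2q}(\Omega)$ (valid since $p<n/(n-2)$) then produce the bound $\|\mathcal{F}(U_1)-\mathcal{F}(U_2)\|_\mathcal{H}\le C(R)\|U_1-U_2\|_\mathcal{H}$ on any ball of radius $R$.

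For step (iii), Theorems 6.1.4--6.1.5 in \cite{LT4} yield a unique mild solution on some maximal interval $[0,T_{\max})$, regular on $D(\mathcal{A})$. Differentiating the energy \eqref{7} along regular solutions gives \eqref{17}, which is nonpositive by monotonicity of $g$ componentwise and positivity of $a,\eta$; by the density argument this extends to mild solutions, and combined with $F\geq 0$ from \eqref{5} it yields $\tfrac12\|(u,u_t)\|_\mathcal{H}^2\le E_u(t)\le E_u(0)$ on $[0,T_{\max})$. The blow-up alternative then forces $T_{\max}=+\infty$. The main delicate step is the maximality of $\mathcal{A}$: verifying the hemicontinuity of $B$ rigorously, and invoking the right perturbation result (Br\'ezis/Barbu) for the sum of a maximal monotone and a merely monotone hemicontinuous coercive operator; everything else is essentially bookkeeping once that is in place.
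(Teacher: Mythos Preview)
Your proposal is correct and follows essentially the same approach as the paper: the same decomposition $\mathcal{A}(u,h)=(-h,Au+Bh)$ with $A=-\Delta$ and $B v=-\mathop{\rm div}(a(x)g(\nabla v))+\eta(x)v$, the same monotonicity/hemicontinuity/coercivity verifications, the same appeal to Corollary~1.3 in \cite{barbu} for the surjectivity of $I+A+B$, the same local Lipschitz estimate for $\mathcal{F}$ via H\"older and Sobolev embeddings, and the same invocation of Theorems~6.1.4--6.1.5 in \cite{LT4} together with the energy inequality to rule out finite-time blow-up. The only minor additions in your outline---explicitly noting density of $D(\mathcal{A})$ and the density argument extending \eqref{17} to mild solutions---are standard and implicit in the paper's treatment.
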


\section{Uniform decay for problem \eqref{1}} 
This section is devoted to prove the uniform decay result for the system \eqref{1} under suitable growth hypothesis on the feedback and potential functions. Before stating our goal, we have to define some needed functions. We will adapt ideas firstly introduced in the literature by Lasiecka and Tataru \cite{LT21} for attractive forces and expanded by Cavalcanti et al. \cite{LT13} for repulsive forces (sources). Let consider $h_i$ be a concave, strictly increasing functions, with $h_i (0) = 0$, satisfying 
\begin{equation}\label{17'}
h_i(s_ih_i(s_i)\geq s_i^2+g_i^2(s_i),\;\mbox{ for } |s_i|<1,
\end{equation}
for $i=1,..,n$. Note that such function can be straightforwardly constructed, given the hypothesis on function $g_i$ given in \eqref{6}. We define, also, for $i=1,..,n$, the function $\hat{h}_i$ by $\hat{h}_i(\cdot)=h_i\left(\frac{\cdot}{meas(Q_T)}\right)$, where $Q_T=\Omega\times(0,T)$. With these later functions, let us take the function $\hat{h}$ defined as 
$$ \hat{h}(\cdot)=\sum_{i=1}^n\hat{h}_i(\cdot). $$
As $\hat{h}$ is monotone increasing, $\mu I + \hat{h}$ is invertible for all $\mu \geq 0$. For $L$ a positive constant, we
then set, respectively,
\begin{equation}\label{17''}
 z(x)=(\mu I+\hat{h})^{-1}(Lx),\;\;\; L:=(C(1+\left\|a\right\|_\infty) \, \mbox{meas}(Q_T))^{-1}, 
\end{equation}
where $C$ and $\mu$ are a positive constants that will be determined later.

\medskip

The function $z$ is easily seen to be positive, continuous, and strictly increasing with $z(0)=0.$
Finally, let
\begin{equation}\label{17'''}
\mathcal{R}=x-(I-z)^{-1}(x).
\end{equation}
 Let us consider $T_0>0$ associated to the geometric control condition introduced in Bardos, Lebeau and Rauch \cite{LT6} which means that every ray of the geometric optics enters $\omega:=\Omega/A$ in a time $T^*<T_0$. 

\medskip

Our goal is to show that the solutions of problem \eqref{1} decay uniformly to zero which reads as follows:
\begin{thm} \label{thm3.1}
Suppose that the Assumptions \ref{a1. 1}-\ref{a1.5} are fulfilled. Let $u$ be the mild solution of the problem \eqref{1} (according to Theorem \ref{thm2.1}). Then, the
following uniform decay rate estimate, for bounded initial data, holds
$$E_u(t)\leq S\left(\frac{t}{T_0}-1\right),\;\; \mbox{ for all } t>T_0,$$
where the contraction semigroup $S(t)$ is the solution of the differential equation 
$$ \frac{d}{dt}S(t)+\mathcal{R}(S(t))=0,\;\;\;S(0)=E(0), $$
where $\mathcal{R}$ is given in \eqref{17'''}. 
\end{thm}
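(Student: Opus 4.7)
The plan is to reduce Theorem \ref{thm3.1} to the nonlinear observability inequality \eqref{772} and then invoke the nonlinear Lasiecka--Tataru comparison argument, which converts such an inequality into the decay rate governed by $\mathcal{R}$. The work therefore splits into two largely independent parts: establishing \eqref{772}, and performing the ODE comparison.

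\textbf{Step 1: Nonlinear comparison (given the observability inequality).} Assuming \eqref{772} has been proved on some interval $[0,T]$ with $T>T_0$ for solutions with $E_u(0)\le R$, I would combine it with the dissipation identity \eqref{771} and the reverse Jensen inequality for the concave maps $\hat h_i$ of \eqref{17'}, following the standard scheme of \cite{LT21} and \cite{LT13}. Splitting each integral $\int_0^T\!\!\int_\Omega a(x)(|\partial_{x_i}\partial_t u|^2+|g_i(\partial_{x_i}\partial_t u)|^2)\,dx\,dt$ according to $|\partial_{x_i}\partial_t u|\lessgtr 1$, bounding the large-velocity part by \eqref{6} through the dissipation $-dE_u/dt$, and bounding the small-velocity part via $\hat h_i$ and Jensen's inequality, I obtain
$$E_u(T)+\mathcal{R}(E_u(T))\le E_u(0),$$
with $\mathcal{R}$ as in \eqref{17'''}, $z$ and $L$ as in \eqref{17''}. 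Iterating over $[mT,(m+1)T]$ and comparing the discrete sequence $E_u(mT)$ with the continuous solution $S(t)$ of $S'+\mathcal{R}(S)=0$ yields the stated estimate $E_u(t)\le S(t/T_0-1)$.

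\textbf{Step 2: The observability inequality \eqref{772}.} I argue by contradiction: assume there is a sequence $(u_k)$ of finite-energy solutions with $E_{u_k}(0)\le R$ but
$$\frac{1}{E_{u_k}(0)}\Bigl(\int_0^T\!\!\int_\Omega a(x)(|\nabla\partial_t u_k|^2+|g(\nabla\partial_t u_k)|^2)\,dx\,dt+\int_0^T\!\!\int_\Omega\eta(x)|\partial_t u_k|^2\,dx\,dt\Bigr)\longrightarrow 0.$$
Setting $\sigma_k=\sqrt{E_{u_k}(0)}$ and $v_k=u_k/\sigma_k$, we have $E_{v_k}(0)=1$ and the normalized right-hand side still tends to zero. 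Using \eqref{6}, both $a^{1/2}\nabla\partial_t v_k$ (on $\omega$, where $a>0$) and $\eta^{1/2}\partial_t v_k$ (on $\mathcal O_\varepsilon$) tend to zero in $L^2(\Omega\times(0,T))$. Combined with the compactness of the embedding $H^1_0\hookrightarrow L^{2q}$ used in Section 2, Assumption \ref{a1.5} and a unique continuation argument, I then derive
$$\int_0^T\!\!\int_{\Omega\setminus\mathcal V}|\partial_t v_k|^2\,dx\,dt\longrightarrow 0,\quad \mathcal V=\{x\in A:\,d(x,\partial A)>\varepsilon/2\}.$$

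\textbf{Step 3: Microlocal propagation inside $A$.} To extend the convergence into the set $\mathcal V$, I introduce the microlocal defect measure $\chi$ of the sequence $\partial_t v_k$ in $L^2_{loc}(\Omega\times(0,T))$. Since $a\equiv 0$ on $\mathrm{int}(A)$, on that set the equation becomes $\partial_t^2 v_k-\Delta v_k+\eta(x)\partial_t v_k+\sigma_k^{-1}f(u_k)=0$. Because $u_k/\sigma_k$ is bounded in $H^1_0$ and $f$ is subcritical, $\sigma_k^{-1}f(u_k)$ is precompact in $L^2$, and together with $\eta\,\partial_t v_k\to 0$ in $L^2$ this gives $(\partial_t^2-\Delta)\partial_t v_k\to 0$ in $H^{-1}_{loc}(\mathrm{int}(A)\times(0,T))$. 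By the classical propagation theorem of G\'erard, $\chi$ is supported in the characteristic set and invariant under the bicharacteristic flow of $\partial_t^2-\Delta$ inside $\mathrm{int}(A)\times(0,T)$. Since $\mathrm{supp}(\chi)\subset\mathcal V\times(0,T)$ from Step 2, and every bicharacteristic leaving $\mathcal V$ meets either $\mathcal O_\varepsilon$ (where $\eta\geq\eta_0$ forces $\chi=0$) or exits $A$ through $\partial A$ into $\omega$ (where by Assumption \ref{a1.4} and $a>0$ we again have $\chi=0$), propagation forces $\mathrm{supp}(\chi)=\emptyset$. Consequently $\partial_t v_k\to 0$ in $L^2_{loc}(\Omega\times(0,T))$, and an equipartition argument (multiplying the equation by $v_k$ and integrating by parts on $[0,T]\times\Omega$) upgrades this to $E_{v_k}(0)\to 0$, contradicting $E_{v_k}(0)=1$.

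\textbf{Main obstacle.} The hardest point, and the one that requires the most care, is Step 3, namely the microlocal propagation. The Kelvin--Voigt term $\partial_t\,\mathrm{div}(a(x)g(\nabla\partial_t u_k))$ is an unbounded nonlinear perturbation whose $H^{-2}$-boundedness is weaker than the $H^{-1}$-smallness one would like, so direct propagation on all of $\Omega$ fails. The argument hinges on the geometric separation $\Omega=\omega\cup A$ with the frictional damping $\eta$ activated precisely in a neighbourhood of the interface $\partial A$, which is exactly what allows one to stitch together the GCC-controlled region $\omega$ with the interior $\mathrm{int}(A)$ where the wave equation is pure enough for propagation. Checking that every bicharacteristic escapes $\mathcal V$ through either $\mathcal O_\varepsilon$ or $\omega$ in time $\le T$ is the geometric core of the proof and fixes the choice of $T>T_0$.
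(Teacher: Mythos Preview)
Your proposal is correct and follows essentially the same route as the paper: contradiction argument for the observability inequality with normalization $v_k=u_k/\sigma_k$, unique continuation (Assumption \ref{a1.5}) to kill the weak limit, microlocal defect measure for $\partial_t v_k$ with propagation carried out only inside $\mathrm{int}(A)$ where the Kelvin--Voigt term vanishes and the $H^{-1}_{loc}$ convergence \eqref{775} is available, equipartition to reach $E_{v_k}(0)\to 0$, and then the Lasiecka--Tataru scheme (split by $|\partial_{x_i}\partial_t u|\lessgtr 1$, Jensen on the concave $\hat h_i$, iterate). Two small points: the discrete inequality you obtain should read $E_u(T)+z(E_u(T))\le E_u(0)$ with $z$ from \eqref{17''}, not $\mathcal{R}$; and in the propagation step you cannot follow bicharacteristics across $\partial A$ into $\omega$, but you do not need to, since every geodesic leaving $\mathcal V$ first crosses $A\setminus\mathcal V\subset\mathcal O_\varepsilon\cap A$ where $\chi$ already vanishes --- this is exactly how the paper argues.
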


The principal tool of the proof of this theorem consists to obtain an observability inequality for the system \eqref{1}. To find such a desired result some micro-local analysis techniques will be developed. 

\medskip
                               
Integrating the identity \eqref{17}, we get for all $0\leq t_1\leq t_2 <\infty$ the following one 
\begin{equation}\label{18}
E_u(t_2)-E_u(t_1)=-\int_{t_1}^{t_2}\int_\Omega\left\{ a(x)g(\nabla \partial_tu)\nabla \partial_tu+\eta(x)|\partial_tu|^2\right\}\;dxdt.
\end{equation}
Thus, we have to prove the following lemma.
\begin{lemma} \label{lemma3.2} For every $T\geq T_0$ and for $R>0$, there exists a real $c>0$ depending only in initial data such that the associated solution of \eqref{1} verifies that 
\begin{equation}\label{19}
 E_u(0)\leq c\left( \int_0^T\int_\Omega a(x)\left(|\nabla \partial_t u|^2 +|g(\nabla \partial_t u)|^2 \right)dxdt+ \int_0^T\int_\Omega\eta(x)|\partial_t u|^2\;dxdt \right), 
\end{equation}
where the energy-norm of the initial data is in $(0,2R)$.
\end{lemma}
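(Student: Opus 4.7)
\textbf{Proof plan for Lemma \ref{lemma3.2}.}

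The strategy is a by-now-classical contradiction/compactness--uniqueness argument \`a la Dehman--G\'erard--Lebeau, adapted to the mixed nonlinear Kelvin--Voigt/frictional damping. Suppose the conclusion fails: then one can extract a sequence $\{u_k\}$ of mild solutions to \eqref{1} with $E_{u_k}(0)\in(0,2R)$ and
\[
\int_0^T\!\!\int_\Omega a(x)\bigl(|\nabla\partial_t u_k|^2+|g(\nabla\partial_t u_k)|^2\bigr)\,dxdt+\int_0^T\!\!\int_\Omega \eta(x)|\partial_t u_k|^2\,dxdt\;<\;\tfrac{1}{k}\,E_{u_k}(0).
\]
Set $\sigma_k=\sqrt{E_{u_k}(0)}$ and $v_k=u_k/\sigma_k$. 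Up to a subsequence $\sigma_k\to\sigma\in[0,\sqrt{2R}]$, and $v_k$ solves the normalized equation
\[
\partial_t^2 v_k-\Delta v_k-\tfrac{1}{\sigma_k}\mathop{\rm div}\bigl(a(x)g(\sigma_k\nabla\partial_t v_k)\bigr)+\eta(x)\partial_t v_k+\tfrac{1}{\sigma_k}f(\sigma_k v_k)=0.
\]
Since $\{(v_k,\partial_t v_k)\}$ is bounded in $L^\infty(0,T;\mathcal{H})$, by Aubin--Lions one can assume $v_k\rightharpoonup v$ weakly-* and $v_k\to v$ strongly in $L^2(0,T;L^2(\Omega))$. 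The hypotheses on $g$ give a quadratic lower bound on $a(x)g(\nabla\partial_t v_k)\cdot\nabla\partial_t v_k$ so that $\sqrt{a}\,\nabla\partial_t v_k\to 0$ in $L^2(Q_T)$; in particular the Kelvin--Voigt contribution vanishes on $\omega$, and $\sqrt{\eta}\,\partial_t v_k\to 0$ in $L^2(Q_T)$ delivers strong convergence of $\partial_t v_k$ to $0$ on $\mathcal{O}_\varepsilon$.

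The next, and central, step is to upgrade this localized convergence to $\partial_t v_k\to 0$ in $L^2_{loc}(\Omega\times(0,T))$ via a microlocal defect measure analysis. Let $\chi$ denote the microlocal defect measure of $\{\partial_t v_k\}$ in $L^2_{loc}(\Omega\times(0,T))$. First one checks that $D\partial_t v_k:=(\partial_t^2-\Delta)\partial_t v_k\to 0$ in $H^{-2}_{loc}$, so $\chi$ is supported on the characteristic set of $\partial_t^2-\Delta$. The key regularity gain is to improve this to $H^{-1}_{loc}$ on the interior of $A$: there the term $\sigma_k^{-1}\mathop{\rm div}(a(x)g(\sigma_k\nabla\partial_t v_k))$ identically vanishes, and $\sigma_k^{-1}f(\sigma_k v_k)$ is controlled using Assumption~\ref{a1. 1} and the subcritical embedding, so only $\eta\,\partial_t v_k$ remains, which is compactly supported in $\mathcal{O}_\varepsilon$ where already $\partial_t v_k\to 0$ strongly. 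Consequently $\chi$ propagates along bicharacteristics inside $A\times(0,T)$, and from the frictional-damping step we know $\chi$ vanishes on $\mathcal{O}_\varepsilon\times(0,T)$, in particular on a neighborhood of $\partial A$ inside $A$; combined with the geometric control condition (Assumption \ref{a1.4}), which forces every bicharacteristic to meet $\omega\times(0,T)$ where $\chi\equiv0$, this yields $\chi\equiv0$, i.e.\ $\partial_t v_k\to 0$ strongly in $L^2_{loc}(\Omega\times(0,T))$.

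Having $\partial_t v_k\to 0$ in $L^2(Q_T)$, one multiplies the $v_k$-equation by $v_k$ and integrates by parts on $(0,T)\times\Omega$ (equipartition of energy). Combined with the Kelvin--Voigt bound, this gives $\nabla v_k\to 0$ in $L^2(Q_T)$ and hence $v_k\to 0$ in $L^2(0,T;H^1_0(\Omega))$. The limit $v$ therefore satisfies a linear wave equation $\partial_t^2 v-\Delta v+V(x,t)v=0$ with potential $V\in L^\infty(Q_T)$ coming from the linearization of $\sigma_k^{-1}f(\sigma_k v_k)$ and vanishes on $\omega\times(0,T)$, so Assumption \ref{a1.5} forces $v\equiv 0$. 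Translating back, this contradicts the normalization $E_{u_k}(0)=\sigma_k^2$, which (using $F(s)\le f(s)s\le C|s|^{p+1}$ and the subcritical embedding) gives $E_{v_k}(0)\to 1$ or at least a strictly positive lower bound, producing the desired contradiction.

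The delicate point, and the one on which the whole scheme hinges, is the microlocal propagation step: the nonlinear Kelvin--Voigt operator does \emph{not} a priori give $D\partial_t v_k\to 0$ in $H^{-1}_{loc}$ globally, only in $H^{-2}_{loc}$. This is precisely where the interior-of-$A$ structural assumption (where $a\equiv 0$) together with the frictional damping $\eta$ effective on a full neighborhood of $\partial A$ intervene: the $\eta$-damping kills $\partial_t v_k$ on $\mathcal{O}_\varepsilon$ in a strong sense, so that the microlocal propagation can be initiated from $\mathcal{O}_\varepsilon\cap A$ and carried through $A$ up to $\omega$. Handling the scaled nonlinearity $\sigma_k^{-1}f(\sigma_k v_k)$ in $H^{-1}_{loc}$ under the subcritical growth \eqref{4} is the technical bookkeeping that accompanies this argument.
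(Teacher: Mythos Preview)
Your plan assembles all the right ingredients---contradiction, normalization $v_k=u_k/\sigma_k$, microlocal defect measure $\chi$ for $\partial_t v_k$, the $H^{-2}_{loc}$ versus $H^{-1}_{loc}$ dichotomy for $D\partial_t v_k$, propagation inside $A$ using that $a\equiv 0$ there, and equipartition---and this is exactly the paper's architecture. But your ordering creates a real gap: you invoke the microlocal defect measure of $\{\partial_t v_k\}$ \emph{before} establishing that its weak limit is $0$. The m.d.m.\ theory (Theorem~\ref{thmA1}) is set up for sequences converging weakly to zero, so you must first show $\partial_t v=0$, i.e.\ $v=0$. In the paper this is done \emph{prior} to any microlocal analysis: one first splits on whether the weak limit $u$ of the unnormalized sequence is zero or not (if $u\neq 0$ one applies Assumption~\ref{a1.5} directly to $\partial_t u$ without normalizing), and then, in the case $u=0$, one normalizes, splits again on $\sigma>0$ versus $\sigma=0$, and uses Assumption~\ref{a1.5} on $\partial_t v$ to conclude $v=0$. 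Only after $v=0$ is in hand does the m.d.m.\ argument begin. Your placement of the unique continuation step at the very end, after you have already claimed $\partial_t v_k\to 0$ and $\nabla v_k\to 0$ strongly, makes it redundant rather than load-bearing.

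The same ordering issue bites a second time inside the propagation step. You say $\sigma_k^{-1}f(\sigma_k v_k)$ is ``controlled'' by Assumption~\ref{a1. 1}, but what is needed for $D\partial_t v_k\to 0$ in $H^{-1}_{loc}(\mathrm{int}(A)\times(0,T))$ is the \emph{strong} convergence $\sigma_k^{-1}f(\sigma_k v_k)\to 0$ in $L^2$ (the paper's \eqref{51}), and this relies on already knowing $v=0$ (via \eqref{36}, \eqref{42}, \eqref{43}). Boundedness alone does not give a zero limit. So: keep your ingredients, but move the case analysis and the unique continuation argument for $v$ (and, before normalization, for $u$) to the front, ahead of the microlocal machinery.
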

\begin{proof}
The proof of this lemma will be conducted by arguing by contradiction. Suppose \eqref{19} is not true and hence there is $T\geq T_0$ and $R>0$ such that for all real $c>0$, there exists a solution $u_c$ for \eqref{1} satisfying $E_{u_c}(0)<R$ and such that $u_c$ violates the inequality \eqref{19}. \\
Particularly, for every $c=k\in \mathbb{N}$,  we obtain the existence of an initial datum $(u_{0,k},u_{1,k})$ such that $\left\|(u_{0,k},u_{1,k})\right\|_\mathcal{H}\leq 2R$, and whose corresponding solution $u_k$ satisfies
$$ E_{u_k}(0)\geq k\left( \int_0^T\int_\Omega a(x)\left(|\nabla \partial_t u_k|^2 +|g(\nabla \partial_t u_k)|^2 \right)dxdt+ \int_0^T\int_\Omega\eta(x)|\partial_t u_k|^2\;dxdt \right). $$
Further, we find a sequence of solution $(u_k)_{k\in \mathbb{N}}$ to problem \eqref{1} such that 
$$ \lim_{k\rightarrow \infty}\frac{E_{u_k}(0)}{\int_0^T\int_\Omega a(x)\left(|\nabla \partial_t u_k|^2 +|g(\nabla \partial_t u_k)|^2 \right)dxdt+ \int_0^T\int_\Omega\eta(x)|\partial_t u_k|^2\;dxdt}=+\infty, $$
which is equivalent to the following limit to zero
\begin{equation}\label{20}
\lim_{k\rightarrow \infty}\frac{\int_0^T\int_\Omega a(x)\left(|\nabla \partial_t u_k|^2 +|g(\nabla \partial_t u_k)|^2 \right)dxdt+ \int_0^T\int_\Omega\eta(x)|\partial_t u_k|^2\;dxdt}{E_{u_k}(0)}=0.
\end{equation}
Bearing in mind the boundedness of the sequence $(E_{u_k}(0))_{k\in \mathbb{N}}$, the convergence \eqref{20} gives 
\begin{equation}\label{21}
\lim_{k\rightarrow \infty}\int_0^T\int_\Omega a(x)\left(|\nabla \partial_t u_k|^2 +|g(\nabla \partial_t u_k)|^2 \right)dxdt+ \int_0^T\int_\Omega\eta(x)|\partial_t u_k|^2\;dxdt=0.
\end{equation}
It is worth mentioning, also, that there exists a subsequence of $(u_k)_{k\in \mathbb{N}}$, which will be denoted by
the same notation, such that 
\begin{equation}\label{22}
u_k \rightharpoonup u \mbox{ weakly}^\ast\; \mbox{ in } L^\infty(0,T;H^1_{0}(\Omega)),\;\; \mbox{ as } k\rightarrow +\infty,
\end{equation}
 \begin{equation}\label{23}
\partial_tu_k \rightharpoonup \partial_tu \mbox{ weakly}^\ast\; \mbox{ in } L^\infty(0,T;L^2(\Omega)),\;\; \mbox{ as } k\rightarrow +\infty,
\end{equation}
and, thanks to Aubin-Lions-Simon Theorem (see \cite{LT2}), we have
 \begin{equation}\label{24}
u_k \rightarrow u \mbox{ strongly }\; \mbox{ in } L^\infty(0,T;L^q(\Omega)),\;\; \mbox{ as } k\rightarrow +\infty,\;\mbox{ for all } q\in \left[2,\frac{2n}{n-2}\right).
\end{equation}
Accordingly, we can deduce that 
\begin{equation}\label{25}
f(u_k) \rightharpoonup f(u) \mbox{ weakly}^\ast\; \mbox{ in } L^2(0,T;L^2(\Omega)),\;\; \mbox{ as } k\rightarrow +\infty.
\end{equation}
Now, we are in position to distinguish two cases where $u\neq 0$ and $u= 0$. First of all, let us take the subset $\mathcal{V}$ of $A$ defined as 
$$ \mathcal{V}=\left\{ x\in A:\; d(x,y)>\frac{\epsilon}{2},\;\;y\in \partial A \right\}. $$
Case (i): $u\neq 0$.\\
We recall that for all $k\in \mathbb{N}$, $u_k$ solve the problem 
\begin{equation}\label{26}
 \left\{\begin{array}{lll}
       \partial_t^2u_k-\Delta u_k-\mathop{\rm div} \big( a(x)g(\nabla\partial_tu_k)\big)+\eta(x)\partial_tu_k+f(u_k)=0,\;\;&\mbox{in}& \;\; \Omega \times (0,\infty),\\
			
                    u_k =0, &\mbox{on}&\partial\Omega\times (0,\infty),\\
                       
 \displaystyle  u_k(x,0)=u_{0,k}(x),\;\;\;\partial_tu_k(x,0)=u_{1,k}(x), \;\;&\mbox{in}& \;\; \Omega.
                   
 \end{array}\right.
 \end{equation} 
Passing to the limit in \eqref{26} and exploring \eqref{21}-\eqref{25}, it follows  
\begin{equation}\label{27}
 \left\{\begin{array}{lll}
       \partial_t^2u-\Delta u+f(u)=0,\;\;&\mbox{in}& \;\; \Omega \times (0,\infty),\\
			
                    u =0, &\mbox{on}&\partial\Omega\times (0,\infty),\\
                       
 \partial_tu=0, \;\;&\mbox{in}& \;\; \Omega\backslash\mathcal{V}.
                   
 \end{array}\right.
 \end{equation} 
Denoting $w=\partial_tu$, the problem \eqref{27} implies
\begin{equation}\label{28}
 \left\{\begin{array}{lll}
       \partial_t^2w-\Delta w+V(x,t)w=0,\;\;&\mbox{in}& \;\; \Omega \times (0,\infty),\\
			
                    w =0, &\mbox{on}&\partial\Omega\times (0,\infty),\\
                       
 w=0, \;\;&\mbox{in}& \;\; \Omega\backslash\mathcal{V},
                   
 \end{array}\right.
 \end{equation} 
with $V(x,t)=f'(u(x,t)).$

Consequently, since Assumption \ref{a1.5}, we deduce that $w=\partial_tu=0$ and, thus, returning to \eqref{27} we deduce that $u=0$ which presents the desired contradiction as well.\\
Case (ii): $u=0$.\\
Now, we define
\begin{equation}\label{29}
\sigma_k:=\sqrt{E_{u_k}(0)},\;\;\;\mbox{and}\;\;\; v_k=\frac{u_k}{\sigma_k},
\end{equation}
and, thus, let us consider the following normalized problem
\begin{equation}\label{30}
 \left\{\begin{array}{lll}
      \displaystyle \partial_t^2v_k-\Delta v_k-\mathop{\rm div} \left( \frac{1}{\sigma_k}a(x)g(\nabla\partial_tu_k)\right)+\eta(x)\partial_tv_k+\frac{1}{\sigma_k}f(u_k)=0,\;&\mbox{in}& \; \Omega \times (0,\infty),\\			
                    v_k =0, &\mbox{on}&\partial\Omega\times (0,\infty),\\                      
 \displaystyle  v_k(x,0)=\frac{u_{0,k}}{\sigma_k},\;\;\;\partial_tv_k(x,0)=\frac{u_{1,k}}{\sigma_k},&\mbox{in}&  \Omega.
 \end{array}\right.
 \end{equation} 
We define the associated energy by 
\begin{equation}\label{301}
 E_{v_k}(t)=\frac{1}{2} \int_\Omega\left\{|\partial_tv_k(x,t)|^2+|\nabla v_k(x,t)|^2\right\}\;dx+\frac{1}{\alpha_k^2}\int_\Omega F(u_k(x,t))\;dx.
\end{equation}
It is easy to verify that 
\begin{equation}\label{31}
 E_{v_k}(t)=\frac{1}{\alpha_k^2} E_{u_k}(t),\;\;\forall t\geq 0.
\end{equation}
Note that, by the definition of $\sigma_k$, \eqref{31} gives in particular, for $t=0$, that $E_{v_k}(0)=1$. To conclude the contradiction, our strategy is to prove that $E_{v_k}(0) $ converges to zero. Indeed, first, taking \eqref{21} and \eqref{29} into account we infer
\begin{equation}\label{32}
\lim_{k\rightarrow \infty}\int_0^T\int_\Omega a(x)\left(|\nabla \partial_t v_k|^2 +\frac{1}{\sigma_k^2}|g(\nabla \partial_t u_k)|^2 \right)dxdt+ \int_0^T\int_\Omega\eta(x)|\partial_t v_k|^2\;dxdt=0.
\end{equation}
On the other hand, from the boundedness of $E_{v_k}(0) $, we also deduce that for an eventual subsequence of $(v_k)_{k\in \mathbb{N}}$, still denoted by the same way, satisfying
\begin{equation}\label{34}
v_k \rightharpoonup v \mbox{ weakly}^\ast\; \mbox{ in } L^\infty(0,T;H^1_{0}(\Omega)),\;\; \mbox{ as } k\rightarrow +\infty,
\end{equation}
 \begin{equation}\label{35}
\partial_tv_k \rightharpoonup \partial_tv \mbox{ weakly}^\ast\; \mbox{ in } L^\infty(0,T;L^2(\Omega)),\;\; \mbox{ as } k\rightarrow +\infty.
\end{equation}
From standard compactness arguments, we deduce, for an eventual subsequence, that will be denoted by the same notation, that
\begin{equation}\label{36}
v_k \rightarrow v \mbox{ strongly }\; \mbox{ in } L^\infty(0,T;L^q(\Omega)),\;\; \mbox{ as } k\rightarrow +\infty,\;\mbox{ for all } q\in \left[2,\frac{2n}{n-2}\right).
\end{equation}
Further, note that, for an eventual subsequence, $\sigma_k \rightarrow \sigma \in [0,\infty)$.

Subcase (a): $\sigma >0$. Recalling that we are in the case where $u=0$, using \eqref{25} and the fact that $f(0)=0 $, it follows
\begin{equation}\label{37}
\frac{1}{\sigma_k}f(u_k) \rightharpoonup 0, \mbox{ weakly in } L^2(0,T,L^2(\Omega)).
\end{equation}
Then, passing to the limit in \eqref{30} and using the convergences \eqref{32}-\eqref{37}, we infer that
\begin{equation}\label{38}
 \left\{\begin{array}{lll}
       \partial_t^2v-\Delta v=0,\;\;&\mbox{in}& \;\; \Omega \times (0,\infty),\\
			
                    v =0, &\mbox{on}&\partial\Omega\times (0,\infty),\\
                       
 \partial_tv=0, \;\;&\mbox{in}& \;\; \Omega\backslash\mathcal{V}.
                   
 \end{array}\right.
 \end{equation} 
Therefore, taking $y=\partial_tv,$ we obtain the following system 
\begin{equation}\label{39}
 \left\{\begin{array}{lll}
       \partial_t^2y-\Delta y=0,\;\;&\mbox{in}& \;\; \Omega \times (0,\infty),\\
			
                    y =0, &\mbox{on}&\partial\Omega\times (0,\infty),\\
                       
 y=0, \;\;&\mbox{in}& \;\; \Omega\backslash\mathcal{V}.
                   
 \end{array}\right.
 \end{equation} 
Once more, thanks to Assumption \ref{a1.5}, it follows that $y=\partial_tv=0$ which gives, since \eqref{38}, that $v=0$.

Subcase (b): $\sigma=0$. In this case, we are going to show that 
\begin{equation}\label{40}
\frac{1}{\sigma_k}f(u_k) \rightharpoonup f'(0)v, \mbox{ weakly in } L^2(0,T,L^2(\Omega)).
\end{equation}
First of all, the fact that $f\in C^2(\mathbb{R})$ and, thus, Taylor's formula allows us to get
\begin{equation}\label{41}
f(s) =f'(0)s+R(s), \mbox{ where } |R(s)|\leq c(|s|^2+|s|^{p+1}).
\end{equation}
This is obtain by using the condition \eqref{2} for the semi-linearity $f$.

\medskip

Thereafter, we have to write
\begin{equation}\label{42}
\frac{1}{\sigma_k}f(\sigma_k v_k)=f'(0)v_k+\frac{1}{\sigma_k}R(\sigma_kv_k).
\end{equation}
Now, to prove \eqref{40} it suffices to show that 
\begin{equation}\label{43}
\frac{1}{\sigma_k}R(\sigma_kv_k) \rightharpoonup 0\mbox{ weakly in } L^2(0,T,L^2(\Omega)).
\end{equation}
To do so, according \eqref{41} and \eqref{3}, we get 
\begin{equation}\label{44}
\begin{array}{lll}
\displaystyle\left\|\frac{1}{\sigma_k}R(\sigma_kv_k)\right\|^2_{L^2(0,T,L^2(\Omega))}&\leq& \displaystyle C\left\{\int_0^T\int_\Omega\left|\frac{f(\sigma_kv_k)}{\sigma_k}\right|^2\;dxdt+\int_0^T\int_\Omega |f'(0)v_k|^2\;dxdt\right\}\\\\
&\leq&\displaystyle C \left\{\frac{1}{\alpha_k^2} \int_0^T\int_\Omega\left(|\sigma_k v_k|^2+|\sigma_kv_k|^p\right)^2\;dxdt+\int_0^T\int_\Omega |v_k|^2\;dxdt \right\}\\\\
&\leq&\displaystyle C \left\{ \sigma_k^{2(p-1)}\left\|v_k\right\|^{2p}_{L^{2p}(0,T;L^{2p}(\Omega))} +\left\|v_k\right\|^{2}_{L^{2}(0,T;L^{2}(\Omega))} \right\}.
\end{array}
\end{equation}
Using the two facts that $H^1(\Omega)$ is continuously embedded in $L^{2p}(\Omega)$ and the boundedness of the sequence $(v_k)$, we can immediately conclude from \eqref{44} that $\left\{\frac{1}{\sigma_k}R(\sigma_kv_k)\right\}_k$ is bounded in the space $L^2(0,T,L^2(\Omega))$.

\medskip

Whereupon, this sequence admits a subsequence which converges in the associated space, that is, there exists $\delta\in L^2(0,T,L^2(\Omega))$ such that 
\begin{equation}\label{45}
\frac{1}{\sigma_k}R(\sigma_kv_k) \rightharpoonup \delta\mbox{ weakly in } L^2(0,T,L^2(\Omega)).
\end{equation}
Since \eqref{41}, we deduce that 
\begin{equation}\label{46}
\left|\frac{1}{\sigma_k}R(\sigma_kv_k)\right| \leq \sigma_k |v_k|^2+|\sigma_k|^p|v_k|^{p+1}\;\mbox{ in the case }\; 1\leq \frac{n}{n-2},
\end{equation}
which implies that
\begin{equation}\label{47}
\left\|\frac{1}{\sigma_k}R(\sigma_kv_k)\right\|_{L^1(0,T,L^1(\Omega))} \leq C \left(\sigma_k \left\|v_k\right\|^2_{L^2(0,T,L^2(\Omega))}+(\sigma_k)^p\left\|v_k\right\|^{p+1}_{L^{p+1}(0,T,L^{p+1}(\Omega))}\right).
\end{equation}
Therefore 
\begin{equation}\label{48}
\frac{1}{\sigma_k}R(\sigma_kv_k) \rightharpoonup 0 \mbox{ weakly in } L^1(0,T,L^1(\Omega)) \mbox{ as } k\rightarrow \infty.
\end{equation}
Combining \eqref{45} and \eqref{48} together with the embedding $L^2(0,T,L^2(\Omega))$ in $L^1(0,T,L^1(\Omega))$, it result the desired convergence in \eqref{43}. Consequently, \eqref{40} follows from \eqref{34}, \eqref{42} and \eqref{43}.
Next, observing all the convergences \eqref{32}-\eqref{35} and \eqref{40} and passing to the limit in \eqref{30}, we obtain in the distributional sense 
\begin{equation}\label{49}
 \left\{\begin{array}{lll}
       \partial_t^2v-\Delta v+f'(0)v=0,\;\;&\mbox{in}& \;\; \Omega \times (0,\infty),\\
			
                    v =0, &\mbox{on}&\partial\Omega\times (0,\infty),\\
                       
 \partial_tv=0, \;\;&\mbox{in}& \;\; \Omega\backslash\mathcal{V},
                   
 \end{array}\right.
 \end{equation} 
and, for $z=\partial_tv$, we infer 
\begin{equation}\label{50}
 \left\{\begin{array}{lll}
       \partial_t^2z-\Delta z+f'(0)z=0,\;\;&\mbox{in}& \;\; \Omega \times (0,\infty),\\
			
                    z =0, &\mbox{on}&\partial\Omega\times (0,\infty),\\
                       
 z=0, \;\;&\mbox{in}& \;\; \Omega\backslash\mathcal{V}.
                   
 \end{array}\right.
 \end{equation} 
As a consequence, via the Assumption \ref{a1.5} once again, it follows that $w=\partial_tu=0$. Returning to \eqref{27}, we immediately find that $v=0$.

\medskip

Thereby, in any of both cases $\sigma>0$ and $\sigma=0$, we find that $v=0$. As a consequence, $v=0$ in all convergences \eqref{34}-\eqref{36}.
Moreover, by virtue of \eqref{36}, \eqref{42} and \eqref{43} together with the nullity of $v$, it follows that
\begin{equation}\label{51}
\frac{1}{\sigma_k}f(u_k) \rightarrow 0, \mbox{ in } L^2(0,T,L^2(\Omega)).
\end{equation}
Recall that our objective is to show that $E_{v_k}(0)$ converges to zero, where $E_{v_k}(t)$ is defined by \eqref{301} as well. To do so, let us consider the D'Alembert operator given by 
$$ D:=\partial^2_t-\Delta. $$
By the first equation in \eqref{30}, we have
\begin{equation}\label{52}
Dv_k=\mathop{\rm div} \left( \frac{1}{\sigma_k}a(x)g(\nabla\partial_tu_k)\right)-\eta(x)\partial_tv_k-\frac{1}{\sigma_k}f(u_k),
\end{equation}
which gives, after applying the operator $\partial_t$, that 
\begin{equation}\label{53}
\partial_tDv_k=\partial_t\left(\mathop{\rm div} \left( \frac{1}{\sigma_k}a(x)g(\nabla\partial_tu_k)\right)-\eta(x)\partial_tv_k-\frac{1}{\sigma_k}f(u_k)\right).
\end{equation}
Our immediate task is to seek an appropriate limit to $\partial_tDv_k$, so, we have to determine the convergences of each term in the right hand side of \eqref{53}. Firstly, observing \eqref{51}, we deduce that 
\begin{equation}\label{54}
\frac{1}{\sigma_k}f(u_k) \rightarrow 0, \mbox{ strongly in } L^2(\Omega\times(0,T)) \mbox{ as } k\rightarrow \infty.
\end{equation}
Secondly, thanks to \eqref{32}, we have obviously that 
$$ \frac{1}{\sigma_k}a(x)g(\nabla \partial_tu_k)\rightarrow 0 \mbox{ strongly in } L^2(\Omega\times(0,T)) \mbox{ as } k\rightarrow \infty,$$
that is, 
\begin{equation}\label{55}
\mathop{\rm div} \left(\frac{1}{\sigma_k}a(x)g(\nabla \partial_tv_k)\right) \rightarrow 0, \mbox{ strongly in } H^{-1}_{loc}(\Omega\times(0,T)) \mbox{ as } k\rightarrow \infty.
\end{equation}
Also, by \eqref{32} once more, we deduce that 
\begin{equation}\label{56}
\eta(x)\partial_tv_k \rightarrow 0, \mbox{ strongly in } L^2(\Omega\times(0,T)) \mbox{ as } k\rightarrow \infty.
\end{equation}
Amalgamating \eqref{53} with \eqref{54}-\eqref{56}, it follows that
\begin{equation}\label{57}
D\partial_tv_k \rightarrow 0, \mbox{ strongly in } H^{-2}_{loc}(\Omega\times(0,T)) \mbox{ as } k\rightarrow \infty.
\end{equation}
Let $\chi$ be the microlocal defect measure associated with $(\partial_tv_k)_k$ in $L^2_{loc}(\Omega\times(0,T))$. In view of \eqref{57}, we deduce that the supp($\chi$) is contained in the characteristic set of the wave operator $\left\{\tau^2=\left\|\xi\right\|^2\right\}$, for that see Theorem \ref{thmA2} in Appendix or also Proposition 2.1 and Corollary 2.2 due to G\'erard \cite{LT8}. 

\medskip

Additionally, the existence of the dissipative effects yields that
\begin{equation}\label{58}
\partial_tv_k \rightarrow 0 \;\;\mbox{ in }\; (\Omega\backslash\mathcal{V}) \times (0,T).
\end{equation}
Therewith, the aim is to propagate the convergence of $\partial_tv_k$ to $0$ from $L^2((\Omega\backslash\mathcal{V}) \times (0,T))$ to the whole space $L^2(\Omega \times (0,T))$. It is worth mentioning that the convergence \eqref{57} is not regular enough to guarantee the desired propagation. This problem of regularity due to the following weak convergence:
\begin{equation}\label{59}
\partial_t\left(\mathop{\rm div} \left[ \frac{1}{\sigma_k}a(x)g(\nabla\partial_tu_k)\right]\right) \rightarrow 0, \mbox{ strongly in } H^{-2}_{loc}(\Omega\times(0,T)) \mbox{ as } k\rightarrow \infty.
\end{equation}
Whereupon, to obtain the propagation to the whole space, the presence of the frictional damping in the neighborhood $\mathcal{O}_\epsilon$ of the boundary of $\partial A $ plays a basic role. In fact, we have $\mathop{\rm div} \left( \frac{1}{\sigma_k}a(x)g(\nabla\partial_tu_k)\right)=0$ in $A\times (0,T)$ and, thus, taking in mind \eqref{53}, we find that
\begin{equation}\label{60}
D\partial_tv_k=\partial_t\left(-\eta(x)\partial_tv_k-\frac{1}{\sigma_k}f(u_k)\right)\rightarrow 0\mbox{ in } H^{-1}_{loc}(\mbox{int}(A)\times(0,T)) \mbox{ as } k\rightarrow \infty.
\end{equation}
In addition, thanks to general microlocal results given by Proposition \ref{prpA4} and Theorem \ref{thmA5} in Appendix, we can deduce that supp$(\chi)$ in $(\mbox{int}(A)\times(0,T))\times S^d $ presents a union of curves like
\begin{equation}\label{61}
t\in I\cap (0,\infty)\mapsto m_\pm (t)=\left( t,x(t),\frac{\pm 1}{\sqrt{1+|G(x)\dot x|^2}}, \frac{\mp G(x)\dot x}{\sqrt{1+|G(x)\dot x|^2}}\right),
 	\end{equation}
 	where $t\in I\mapsto x(t)\in \Omega$ is a geodesic associated to the metric.
	
\medskip

Therefore, The above convergence yields that $\chi$ propagates along the bicharacteristic flow of the operator 
$D(\cdot)$, which signifies, particularly, that if some point $\mu_0=(t_0,x_0,\tau_0,\xi_0)$ does not belong to the supp$(\chi)$, the whole bicharacteristic issued from $\mu_0$ is out of supp$(\chi)$. Further, since supp$\chi\subset \mathcal{V}\times (0,T)$, and the frictional damping effects in both sides of the boundary $\partial A$, we can propagate the kinetic energy from $(\mathcal{O}_{\epsilon/2}\cap A)\times (0,T)$ towards the set $\mathcal{V}\times (0,T)$. Indeed, consider $t_0\in (0,+\infty)$ and $x$ a geodesic of $G=I_d$ defined near $t_0$. Once the geodesics inside $\mathcal{V}$, enter necessarily in $\omega$, they again intersect the set $A\backslash\mathcal{V}$ , consequently, for any geodesic of the metric $G$, with $0\in I$ there exists $t>0$ such that $m_\pm(t)$ does not belong to supp$(\chi)$, so that $m_\pm(t_0)$ does not belong as well. Once the time $t_0$ and the geodesic $x$ were taken arbitrary, we conclude that supp$(\chi)$ is empty. Whereupon, we have
$$ \partial_tv_k \rightarrow 0, \mbox{ in } L^2(\omega\times (0,T)), $$
and considering \eqref{58}, we can deduce the following convergence in the the whole space
\begin{equation}\label{62}
\int_0^T\int_\Omega|\partial_tv_k(x,t)|^2\;dxdt\rightarrow 0,\mbox{ as } k\rightarrow \infty.
\end{equation}
The task ahead is to show that $E_{v_k}(0)\rightarrow 0$ as $k\rightarrow +\infty$. Let us consider the following cut-off function 
	\begin{align*}
		&\theta\in C^{\infty}(0,T),  \quad    0\leq \theta(t) \leq 1,  \quad   \theta(t)=1 \ \mbox{in} \ (\varepsilon,T-\varepsilon),\;\;\epsilon>0.
		\end{align*}
Multiplying  the first equation in \eqref{30} by $v_k \theta$ and integrating by parts, we get			
\begin{equation}\label{63}
\begin{array}{lll}
		&&\displaystyle -\int_0^T \theta(t)\int_\Omega|\partial_tv_k|^2\,dxdt - \int_0^T \theta'(t)\int_\Omega \partial_tv_k v_k\,dxdt
		+\int_0^T \theta(t) \int_\Omega |\nabla v_k|^2 \,dxdt\\
		&&\displaystyle+ \frac{1}{\sigma_k}\int_0^T\theta(t)\int_\Omega f(\sigma_kv_k)v_k\,dxdt 
		+ \frac{1}{\sigma_k}\int_0^T\theta(t)\int_\Omega a(x)g(\nabla\partial_tu_k)\nabla v_k\,dxdt  \\
		 &&\displaystyle+ \int_0^T\theta(t)\int_{\Omega}\eta(x)\partial_tv_kv_k\,dx dt  =0.
		\end{array}
		\end{equation}
Employing the convergences \eqref{32}-\eqref{36} and \eqref{62}, and, having $v=0$, the identity \eqref{63} implies that 
\begin{equation}\label{64}
\lim_{k\rightarrow+\infty}\int_0^{T}\theta(t)\int_\Omega|\nabla v_k|^2+\frac{1}{\sigma_k}f(\sigma_k u_k)v_k\;dxdt =0,
\end{equation}
that is,
\begin{equation}\label{65}
\lim_{k\rightarrow+\infty}\int_\epsilon^{T-\epsilon}\int_\Omega|\nabla v_k|^2+\frac{1}{\sigma_k}f(\sigma_k u_k)v_k\;dxdt =0.
\end{equation}
Furthermore, by the definition of functions $f$ and $F$ and the property as well, the convergence in \eqref{65} yields 
\begin{equation}\label{66}
\lim_{k\rightarrow+\infty}\frac{1}{\sigma_k^2}\int_\epsilon^{T-\epsilon}\int_\Omega F( u_k)\;dxdt =0.
\end{equation}
Thereafter, combining \eqref{62}, \eqref{65} and \eqref{66}, it follows that 
 \begin{equation}\label{67}
 \int_\epsilon^{T-\epsilon} E_{v_k}(t)\,dt \rightarrow 0\mbox{ as } k\rightarrow +\infty.
 \end{equation}
Thus, by the decrease of the energy, it result 
 \begin{equation}\label{68}
 (T-2\epsilon)E_{v_k}(T-\epsilon) \rightarrow 0,\mbox{ as } k\rightarrow +\infty. 
 \end{equation}
The energy identity implies that 
$$ E_{v_k}(T-\epsilon)-E_{v_k}(\epsilon)=-\int_\epsilon^{T-\epsilon}\int_\Omega\frac{1}{\sigma_k} a(x)g(\nabla \partial_tu_k)\nabla \partial_tv_k+\eta(x)|\partial_tv_k|^2\;dxdt. $$ 
This combining with \eqref{32} and \eqref{68} and the arbitrariness of $\epsilon$, we achieve the desired contradiction
$$ E_{v_k}(0)\rightarrow 0,\mbox{ as } k\rightarrow +\infty, $$
and the proof of Lemma \ref{lemma3.2} is, thereby, finished.
\end{proof}
\begin{proof}\textit{of Theorem \ref{thm3.1}} In order to obtain the decay rate of energy to the problem \eqref{1}, it sufficient to apply the Lemma \ref{lemma3.3}. Indeed, Since \eqref{18}, \eqref{19} we have
 \begin{equation}\label{69}
\begin{array}{lll}
 E_u(T) &\leq&\displaystyle C \sum_{i=1}^n \int_0^T\int_\Omega a(x)\left(|\partial_{x_i} \partial_t u|^2 +|g_i(\partial_{x_i} \partial_t u)|^2 \right)dxdt+ C\int_0^T\int_\Omega\eta(x)|\partial_t u|^2\;dxdt,\\
&\leq& \displaystyle C \sum_{i=1}^n \int_{\Sigma_A^i\cup \Sigma_B^i} a(x)\left(|\partial_{x_i} \partial_t u|^2 +|g_i(\partial_{x_i} \partial_t u)|^2 \right)dxdt+ C\int_0^T\int_\Omega\eta(x)|\partial_t u|^2\;dxdt,
\end{array}
\end{equation}
where $\Sigma_A^i=\left\{ (t,x)\in Q_T;\;\;|\partial_{x_i} \partial_t u|>1 \mbox{ a.e.} \right\}$, $\Sigma_B^i=Q_T\backslash\Sigma_A^i$ and $Q_T$ is already defined in the begin of this section.

\medskip

On other hand, taking in mind the hypothesis \eqref{6}, we find 
 \begin{equation}\label{70}
\int_{\Sigma_A^i} a(x)\left(|\partial_{x_i} \partial_t u|^2 +|g_i(\partial_{x_i} \partial_t u)|^2 \right)\;dxdt\leq (m^{-1}+M)\int_{\Sigma_A^i}a(x)g_i(\partial_{x_i} \partial_t u)\partial_{x_i} \partial_t u \;dx dt,
 \end{equation}
and, since \eqref{17'}, the facts that, for $i=1,..,n$, $h_i$ is a concave, increasing function and that $\frac{a(x)}{1+\left\|a\right\|_\infty}\leq a(x)$, it follows 
 \begin{equation}\label{71}
\begin{array}{lll}
\displaystyle \int_{\Sigma_B^i} a(x)\left(|\partial_{x_i} \partial_t u|^2 +|g_i(\partial_{x_i} \partial_t u)|^2 \right)dxdt\\
\leq \displaystyle\int_{\Sigma_B^i} a(x) h_i\left(g_i(\partial_{x_i} \partial_t u)\partial_{x_i} \partial_t u\right)\;dxdt\\
\leq \displaystyle\int_{\Sigma_B^i}(1+\left\|a\right\|_\infty)  h_i\left(\frac{a(x)}{1+\left\|a\right\|_\infty}g_i(\partial_{x_i} \partial_t u)\partial_{x_i} \partial_t u\right)\;dxdt\\
\leq \displaystyle\int_{\Sigma_B^i}(1+\left\|a\right\|_\infty)  h_i\left(a(x)g_i(\partial_{x_i} \partial_t u)\partial_{x_i} \partial_t u\right)\;dxdt.
\end{array}
\end{equation}
Thereafter, using Jensen's inequality in \eqref{71}, we have
 \begin{equation}\label{72}
\begin{array}{lll}
\displaystyle \int_{\Sigma_B^i} a(x)\left(|\partial_{x_i} \partial_t u|^2 +|g_i(\partial_{x_i} \partial_t u)|^2 \right)dxdt\\
\leq \displaystyle(1+\left\|a\right\|_\infty) \, \mbox{meas}(Q_T)h_i\left(\frac{1}{\mbox{meas}(Q_T)}\int_{Q_T} a(x)g_i(\partial_{x_i} \partial_t u)\partial_{x_i} \partial_t u\;dxdt\right)\\
\leq \displaystyle (1+\left\|a\right\|_\infty) \, \mbox{meas}(Q_T)\hat{h}_i\left(\int_{Q_T} a(x)g_i(\partial_{x_i} \partial_t u)\partial_{x_i} \partial_t u\;dxdt\right).
\end{array}
\end{equation}
Therewith, substituting \eqref{70} and \eqref{72} in \eqref{69}, we can deduce  
\begin{equation}\label{73}
\begin{array}{lll}
 E_u(T) &\leq&\displaystyle C  (m^{-1}+M)\int_{Q_T}a(x)g(\nabla \partial_t u)\nabla \partial_t u \;dx dt+C\int_{Q_T}\eta(x)|\partial_t u|^2\;dxdt\\
&+&\displaystyle C(1+\left\|a\right\|_\infty) \, \mbox{meas}(Q_T)\sum_{i=1}^n\hat{h}_i\left(\int_{Q_T} a(x)g_i(\partial_{x_i} \partial_t u)\partial_{x_i} \partial_t u\;dxdt\right)\\
&\leq& \displaystyle C  (m^{-1}+M)\int_{Q_T}a(x)g(\nabla \partial_t u)\nabla \partial_t u \;dx dt+\int_{Q_T}\eta(x)|\partial_t u|^2\;dxdt\\
&+&\displaystyle C(1+\left\|a\right\|_\infty)\mbox{meas}(Q_T)\hat{h}\left(\int_{Q_T} a(x)g(\nabla \partial_t u)\nabla \partial_t u\;dxdt\right).
\end{array}
 \end{equation}
Therefore, since the increasing of $\hat{h}$, it follows that 
\begin{equation}\label{74}
\begin{array}{lll}
LE_u(T)&\leq& \displaystyle \mu \left\{\int_{Q_T}a(x)g(\nabla \partial_t u)\nabla \partial_t u \;dx dt+\int_{Q_T}\eta(x)|\partial_t u|^2\;dxdt\right\}\\
&+&\displaystyle\hat{h}\left(\int_{Q_T} a(x)g(\nabla \partial_t u)\nabla \partial_t u\;dxdt+\int_{Q_T}\eta(x)|\partial_t u|^2\;dxdt \right)\\
&\leq&\displaystyle \left(\mu I+\hat{h}\right)\left(\int_{Q_T} a(x)g(\nabla \partial_t u)\nabla \partial_t u\;dxdt+\int_{Q_T}\eta(x)|\partial_t u|^2\;dxdt \right),
\end{array}
\end{equation}
where $$ L= \frac{1}{C(1+\left\|a\right\|_\infty)\mbox{meas}(Q_T) }\;\; \mbox{ and }\;\; \mu=\frac{1+m^{-1}+M}{(1+\left\|a\right\|_\infty)\mbox{meas}(Q_T)}.$$\\
Furthermore, taking the function $z$ in \eqref{17''}, we obtain 
\begin{equation}\label{75}
z(E_u(T))\leq \int_{Q_T} a(x)g(\nabla \partial_t u)\nabla \partial_t u\;dxdt+\int_{Q_T}\eta(x)|\partial_t u|^2\;dxdt,
\end{equation}
which implies, thanks to \eqref{18}, that
\begin{equation}\label{76}
z(E_u(T))+E_u(T)\leq E(0).
\end{equation}
Whereupon, we replace $T$ (resp. $0$) in \eqref{76} with $(m+1)T$ (resp. $mT$) to obtain
\begin{equation}\label{76bis}
z(E_u((m+1)T))+E_u((m+1)T)\leq E(mT),\;\;\; \mbox{for }\; m=0,1,2,...
\end{equation}
Now, to deduce the proof of Theorem \ref{thm3.1}, it sufficient to use the following lemma due to Lasiecka and Tataru \cite{LT21}.
\begin{lemma} \label{lemma3.3}
Let $z$ be a positive, increasing function such that $z(0) = 0$. Since $z$ is increasing,
we can define an increasing function $\mathcal{R}$, $\mathcal{R}=x -(I + z)^{-1}(x)$. Consider a sequence $s_m$ of positive numbers which satisfies 
$$ s_{m+1}+ z(s_{m+1})\leq s_m. $$
Then, $s_m\leq S(m)$ where $S(t)$ is a solution of the differential equation 
$$ \frac{d}{dt}S(t)+\mathcal{R}S(t)=0,\;\;\;S(0)=s_0.$$
Moreover, if $z(x)>0$ for $x>0$, then $\displaystyle\lim_{t\rightarrow +\infty}S(t)=0$.
\end{lemma}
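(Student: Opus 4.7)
\textbf{Proof plan for Lemma \ref{lemma3.3}.}

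My plan is to rewrite the recurrence as a contraction-type inequality and then run a straightforward induction, comparing the discrete sequence with the ODE solution via the monotonicity of the map $\Phi:=(I+z)^{-1}$. First I would observe that, since $z$ is increasing and $z(0)=0$, the map $I+z$ is strictly increasing with $(I+z)(0)=0$, so $\Phi=(I+z)^{-1}$ is well defined and increasing on the range of $I+z$, and $\Phi(x)\le x$ (because $z\ge 0$), which in turn guarantees $\mathcal{R}(x)=x-\Phi(x)\ge 0$. Applying $\Phi$ to the hypothesis $(I+z)(s_{m+1})\le s_m$ yields
\begin{equation*}
s_{m+1}\le \Phi(s_m)=s_m-\mathcal{R}(s_m).
\end{equation*}
In particular, $(s_m)$ is non-increasing and non-negative.

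Next I would prove $s_m\le S(m)$ by induction on $m$. The base $m=0$ holds by the initial condition $S(0)=s_0$. For the inductive step, assume $s_m\le S(m)$. Since $S'=-\mathcal{R}(S)\le 0$, $S$ is non-increasing on $[m,m+1]$, and since $\mathcal{R}$ is increasing, $\mathcal{R}\circ S$ is non-increasing on that interval. Therefore
\begin{equation*}
S(m+1)=S(m)-\int_m^{m+1}\mathcal{R}(S(t))\,dt\ge S(m)-\mathcal{R}(S(m))=\Phi(S(m)).
\end{equation*}
Using the monotonicity of $\Phi$ together with the inductive hypothesis, $s_{m+1}\le \Phi(s_m)\le \Phi(S(m))\le S(m+1)$, closing the induction.

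For the final assertion, assume $z(x)>0$ for $x>0$, so that $\mathcal{R}(x)>0$ on $(0,\infty)$. The ODE solution $S(t)$ is non-increasing and bounded below by $0$, hence $S(t)\downarrow\ell\ge 0$ as $t\to+\infty$. Passing to the limit in $S'(t)=-\mathcal{R}(S(t))$ and using continuity of $\mathcal{R}$ at $\ell$ (inherited from that of $z$), one gets $S'(t)\to-\mathcal{R}(\ell)$; if $\ell>0$ this forces $S(t)\to-\infty$, contradicting $S\ge 0$. Thus $\ell=0$ and the limit claim follows.

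The only step that requires any care is the comparison inequality $S(m+1)\ge \Phi(S(m))$; everything else is either the definition of $\Phi$ or the induction. The argument I wrote uses only that $\mathcal{R}\ge 0$ and $\mathcal{R}$ increasing, both of which are direct consequences of the hypotheses on $z$, so no extra regularity assumption (beyond continuity, which holds in the application) is needed.
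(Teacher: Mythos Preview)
The paper does not prove Lemma~\ref{lemma3.3}; it merely quotes the statement and attributes it to Lasiecka and Tataru \cite{LT21}, using the result as a black box to pass from the discrete inequality \eqref{76bis} to the continuous decay estimate \eqref{78}. Your argument is correct and is essentially the standard proof from that reference: rewrite the recursion as $s_{m+1}\le\Phi(s_m)$ with $\Phi=(I+z)^{-1}$, and compare with the ODE solution via the one-step bound $S(m+1)\ge\Phi(S(m))$ obtained by integrating $S'=-\mathcal{R}(S)$ and using that $\mathcal{R}\circ S$ is non-increasing.

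One small remark on the final limit claim: you invoke continuity of $\mathcal{R}$ at $\ell$, which the lemma does not explicitly assume. A cleaner variant avoids this entirely by using only monotonicity: since $S$ is non-increasing with limit $\ell$, one has $S(t)\ge\ell$ for all $t$, hence $\mathcal{R}(S(t))\ge\mathcal{R}(\ell)$, and if $\ell>0$ then $S'(t)\le-\mathcal{R}(\ell)<0$ for all $t$, forcing $S(t)\to-\infty$. This needs neither continuity of $z$ nor of $\mathcal{R}$ and fits the hypotheses as stated.
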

Applying Lemma \ref{lemma3.3} with $s_m = E(mT)$, thus results in
\begin{equation}\label{77}
E_u(mT) \leq S(m), \;\;\mbox{ for }\; m = 0, 1, 2 . . .
\end{equation}
Lastly, using the dissipativity of $E(t)$ and S(t), we have for $t = mT + \tau$ and $0 \leq \tau \leq T$, 
\begin{equation}\label{78}
E_u(t)\leq E_u(mT) \leq S(m)=S\left(\frac{t-\tau}{T}\right)\leq S\left(\frac{t}{T}-1\right), \;\;\mbox{ for }\; t>T.
\end{equation}
The proof of decay for $E_u$, and thus of the Theorem \ref{thm3.1}, is now complete.
\end{proof}

\section{Appendix: Microlocal Analysis Background}

For the readers comprehension, we will announce some results which can be found in Burq and G\'erard \cite{LT12.}  and in G\'erard \cite{LT8} and were used in the proof of the uniform stability result.

\begin{thm}\label{thmA1}
Let $\{u_n\}_{n\in \mathbb{N}}$ be  a bounded sequence in $L_{loc}^2(\mathcal{O})$ such that it converges weakly to zero in $L_{loc}^2(\mathcal{O})$. Then, there exists a subsequence $\{u_{\varphi(n)}\}$ and a positive Radon measure $\mu$ on $T^1\mathcal{O}:=\mathcal{O}\times S^{d-1}$ such that for all pseudo-differential operator $A$ of order $0$ on $\Omega$ which admits a principal symbol $\sigma_0(A)$ and for all $\chi \in C_0^\infty(\mathcal{O})$ such that $\chi \sigma_0(A)=\sigma_0(A)$, one has
\begin{eqnarray}\label{M6}
\left(A(\chi u_{\varphi(n)}), \chi u_{\varphi_n}\right)_{L^2}\underset{n\rightarrow +\infty}\longrightarrow \int_{\mathcal{O} \times S^{n-1}}\sigma_0(A)(x,\xi)\,d\mu(x,\xi).
\end{eqnarray}
\end{thm}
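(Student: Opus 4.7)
My plan is to construct $\mu$ as the Riesz representative of a limit functional built from the quadratic forms $a\mapsto (Op(a)(\chi u_n), \chi u_n)_{L^2}$ for $a$ a classical $0$-order symbol, and then to verify the three properties that identify $\mu$ as a positive Radon measure on the cosphere bundle $\mathcal{O}\times S^{d-1}$: (i) the functional is well defined and bounded on symbols, (ii) it depends only on the principal symbol evaluated on $S^{d-1}$, and (iii) it is positive on non-negative symbols.

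\textbf{Step 1: uniform estimate and diagonal extraction.} For a compactly supported $0$-order classical symbol $a(x,\xi)$, choose $\chi\in C_0^\infty(\mathcal{O})$ with $\chi\equiv 1$ on $\pi_x(\mathop{\rm supp} a)$. By the Calder\'on--Vaillancourt theorem, $Op(a)$ is bounded on $L^2(\mathbb{R}^d)$, so
\[
|(Op(a)(\chi u_n),\chi u_n)_{L^2}|\leq C_a\,\|\chi u_n\|_{L^2}^2\leq C_a\,M,
\]
where $M=\sup_n\|u_n\|_{L^2(\mathop{\rm supp}\chi)}^2<\infty$. Picking a countable family $\{a_k\}$ dense (in a suitable symbol seminorm) in the space of compactly $x$-supported symbols homogeneous of degree $0$ for $|\xi|\geq 1$, a Cantor diagonal argument produces a subsequence $u_{\varphi(n)}$ such that $\ell(a_k):=\lim_n (Op(a_k)(\chi u_{\varphi(n)}),\chi u_{\varphi(n)})$ exists for every $k$. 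The uniform bound then extends $\ell$ to a continuous linear form on the closure.

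\textbf{Step 2: reduction to the principal symbol on $S^{d-1}$.} Any two $0$-order operators $A,B$ with the same principal symbol differ by an operator of order $-1$, which, composed with the cutoff $\chi$, is compact on $L^2$; since $\chi u_{\varphi(n)}\rightharpoonup 0$ weakly in $L^2$, the pairing $((A-B)(\chi u_{\varphi(n)}),\chi u_{\varphi(n)})\to 0$. Hence $\ell$ descends to a continuous functional on continuous functions on the cosphere bundle. Moreover, if $\sigma_0(A)$ vanishes on $\mathop{\rm supp}\chi\times S^{d-1}$ then one may write $A=A(1-\chi_1)+R$ for some auxiliary cutoff $\chi_1$, leading to vanishing limits, so $\ell$ is supported in the compact set where $\chi$ acts.

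\textbf{Step 3: positivity and Riesz representation.} For $a\geq 0$ the sharp G\aa rding inequality gives $\mathop{\rm Re}(Op(a) v,v)\geq -C\|v\|_{H^{-1/2}}^2$; taking $v=\chi u_{\varphi(n)}$ and using that $\chi u_{\varphi(n)}\to 0$ strongly in $H^{-1/2}_{loc}$ (by Rellich, since $(u_n)$ is bounded in $L^2_{loc}$ and converges weakly), we obtain $\ell(a)\geq 0$. By the Riesz--Markov theorem, the continuous positive linear functional $\ell$ on $C_c(\mathcal{O}\times S^{d-1})$ is represented by a unique positive Radon measure $\mu$, and \eqref{M6} holds for every admissible pair $(A,\chi)$ by the cut-off reduction of Step~2.

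\textbf{Main obstacle.} The delicate point is Step~3: establishing positivity of $\ell$ requires carefully controlling the lower-order error term of G\aa rding's inequality, which is why one exploits the compact embedding $L^2_{loc}\hookrightarrow H^{-1/2}_{loc}$ together with the weak convergence $u_n\rightharpoonup 0$. Once positivity and the independence from sub-principal data (Step~2) are established, Riesz representation is automatic.
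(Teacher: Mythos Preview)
The paper does not actually prove this theorem: it appears in the Appendix under the heading ``Microlocal Analysis Background'' and is explicitly announced as a result taken from Burq--G\'erard \cite{LT12.} and G\'erard \cite{LT8}, stated without proof. So there is no ``paper's own proof'' to compare against.

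That said, your proposal is essentially the classical argument of G\'erard, and is correct in outline. A couple of minor points are worth tightening. In Step~1, the limit functional $\ell$ a priori depends on the cutoff $\chi$; you should check (and this is easy, using again that order $-1$ operators with compact spatial support are compact on $L^2$) that replacing $\chi$ by another cutoff $\tilde\chi$ equal to $1$ on $\pi_x(\mathop{\rm supp}a)$ changes the quadratic form by a term going to zero. In Step~2, the phrase ``descends to a continuous functional on continuous functions on the cosphere bundle'' hides one more ingredient: you need that every $f\in C_c(\mathcal{O}\times S^{d-1})$ is the principal symbol of some classical $0$-order operator, which follows from a standard homogeneous extension plus a density/approximation argument. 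In Step~3, your use of the sharp G\aa rding inequality and Rellich compactness is exactly the right mechanism; note that the error term can equivalently be absorbed by writing $Op(a)=B^*B+R$ with $R$ of order $-1$ when $a\ge 0$ (the Fefferman--Phong or square-root argument), which some references prefer. None of these are genuine gaps, just places where a referee would ask for one more sentence.
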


\begin{Definition}
Under the circumstances of Theorem \ref{thmA1} $\mu$ is called the  microlocal defect measure of the sequence $\{u_{\varphi(n)}\}_{n\in \mathbb{N}}$.
\end{Definition}

\begin{remark}
Theorem \ref{thmA1} assures that for all bounded sequence $\{u_n\}_{n\in \mathbb{N}}$ of $L_{loc}^2(\mathcal{O})$ which converges weakly to zero, the existence of a subsequence admitting a microlocal defect measure. We observe that from (\ref{M6}) in the particular case when $A=f\in C_0^\infty(\mathcal{O})$, it follows that
\begin{eqnarray}\label{M22}
\int_\Omega f(x) |u_{\varphi(n)}(x)|^2\,dx \rightarrow \int_{\mathcal{O} \times S^{d-1}}f(x)\,d\mu(x,\xi),
\end{eqnarray}
so that $u_{\varphi(n)}$ converges to $0$ strongly  if and only if $\mu=0$.
\end{remark}

The second important result reads as follows.

\begin{thm}\label{thmA2}
Let $P$ be a differential operator of order $m$ on $\Omega$ and let $\{u_n\}$ a bounded sequence of $L_{loc}^2(\mathcal{O})$ which converges weakly to $0$ and admits a m.d.m. $\mu$. The following statement are equivalents:
\begin{eqnarray*}
&(i)&Pu_n \underset{n \rightarrow +\infty}\longrightarrow 0 \hbox{ strongly in }H_{loc}^{-m}(\mathcal{O})~(m>0). \\
&(ii)& \hbox{supp} (\mu) \subset \{(x,\xi)\in \mathcal{O} \times S^{n-1}: \sigma_m(P)(x,\xi)=0\}.
\end{eqnarray*}
\end{thm}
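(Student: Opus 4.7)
The plan is to reduce both implications to Theorem \ref{thmA1} by constructing suitable pseudo-differential operators of order $0$ and exploiting the symbolic calculus. Throughout, set $\Lambda=(I-\Delta)^{1/2}$, which is a $\Psi$DO of order $1$ with principal symbol $|\xi|$; then $\Lambda^{-m}P$ is a $\Psi$DO of order $0$ with principal symbol $\sigma_m(P)(x,\xi)/|\xi|^m$, homogeneous of degree $0$ and well defined on $\mathcal{O}\times S^{n-1}$.

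First I would prove $(ii)\Rightarrow (i)$. Fix $\chi\in C_0^\infty(\mathcal{O})$ and choose $\chi'\in C_0^\infty(\mathcal{O})$ with $\chi'\equiv 1$ on $\mathrm{supp}(\chi)$. Since $P$ is differential, $\chi P=\chi P\chi'$ modulo a smoothing term, so
\[
\|\chi Pu_n\|_{H^{-m}}^2=\langle \Lambda^{-m}\chi P(\chi' u_n),\Lambda^{-m}\chi P(\chi' u_n)\rangle+o(1)=\langle A(\chi' u_n),\chi' u_n\rangle+o(1),
\]
where $A:=(\chi' P)^*\chi^2\Lambda^{-2m}(\chi' P)$ is a $\Psi$DO of order $0$ with principal symbol
\[
\sigma_0(A)(x,\xi)=\chi(x)^2\,\frac{|\sigma_m(P)(x,\xi)|^2}{|\xi|^{2m}}.
\]
Applying Theorem \ref{thmA1} with any cutoff equal to $1$ on $\mathrm{supp}(\chi')$, the right-hand side converges to $\int_{\mathcal{O}\times S^{n-1}}\chi(x)^2|\sigma_m(P)(x,\xi)|^2\,d\mu(x,\xi)$, which vanishes by hypothesis $(ii)$. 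Hence $\chi Pu_n\to 0$ in $H^{-m}$, and since $\chi$ is arbitrary, $Pu_n\to 0$ in $H^{-m}_{\mathrm{loc}}(\mathcal{O})$.

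Next I would prove $(i)\Rightarrow (ii)$ by microlocal ellipticity. Let $(x_0,\xi_0)\in\mathcal{O}\times S^{n-1}$ with $\sigma_m(P)(x_0,\xi_0)\neq 0$. By continuity and homogeneity, there is a conic neighborhood $U$ of $(x_0,\xi_0)$ on which $\sigma_m(P)$ does not vanish. The standard parametrix construction provides a $\Psi$DO $E$ of order $-m$ such that for every $B$ of order $0$ with symbol supported in $U$ one has $B=EP+R$ modulo a smoothing operator $R$. Pick $\chi\in C_0^\infty(\mathcal{O})$ with $\chi(x_0)\neq 0$ and let $\chi'\in C_0^\infty(\mathcal{O})$ satisfy $\chi\chi'=\chi$. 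Write
\[
\langle B(\chi u_n),\chi u_n\rangle = \langle EP(\chi u_n),\chi u_n\rangle+\langle R(\chi u_n),\chi u_n\rangle.
\]
The remainder tends to $0$ because $R$ is smoothing and $\chi u_n\rightharpoonup 0$ in $L^2$. For the main term, commute $\chi'$ through: $P(\chi u_n)=\chi Pu_n+[P,\chi]u_n$, and $[P,\chi]$ is of order $m-1$; composing with $E$ of order $-m$ yields an operator of order $-1$ acting on the bounded sequence $u_n$, so it converges to $0$ strongly in $L^2_{\mathrm{loc}}$ by the Rellich--Kondrachov compactness. On the other hand, $E(\chi Pu_n)\to 0$ in $L^2_{\mathrm{loc}}$ since $E:H^{-m}_{\mathrm{loc}}\to L^2_{\mathrm{loc}}$ is continuous and $\chi Pu_n\to 0$ in $H^{-m}_{\mathrm{loc}}$ by $(i)$. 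Hence $\langle B(\chi u_n),\chi u_n\rangle\to 0$, and Theorem \ref{thmA1} gives $\int\chi^2\sigma_0(B)\,d\mu=0$ for all such $B$, which forces $\mu$ to vanish on a neighborhood of $(x_0,\xi_0)$.

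The main technical obstacle is controlling the commutators and the cutoffs so that the symbolic identifications are clean: one must verify that all error terms either are smoothing or involve operators whose order is strictly lower than what is needed, so that Rellich compactness or the $H^{-m}_{\mathrm{loc}}$ convergence absorbs them. Once the parametrix and the order-$0$ operator $A$ above are built so that their principal symbols exactly reproduce $|\sigma_m(P)|^2/|\xi|^{2m}$, the equivalence drops out of Theorem \ref{thmA1}.
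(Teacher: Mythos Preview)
The paper does not supply its own proof of this theorem: it is stated in the Appendix as a background result quoted from G\'erard \cite{LT8} and Burq--G\'erard \cite{LT12.}, with the explicit remark that these results are merely ``announced'' for the reader's convenience. There is therefore no in-paper argument to compare against.

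Your proposal is essentially the standard proof one finds in those references. Reducing $P$ to an order-$0$ operator via $\Lambda^{-m}$, testing against the measure through Theorem~\ref{thmA1}, and using a microlocal parametrix where $\sigma_m(P)\neq 0$ is exactly G\'erard's strategy. A couple of technical points deserve care but do not affect the overall correctness: when you form $A=(\chi' P)^\ast \chi^2\Lambda^{-2m}(\chi' P)$, the operator $\Lambda^{-2m}$ is not properly supported, so to stay within the class of $\Psi$DO's to which Theorem~\ref{thmA1} applies you should insert an additional spatial cutoff (or replace $\Lambda^{-2m}$ by a properly supported representative modulo smoothing); and in the parametrix step, the commutator $[P,\chi]$ is differential of order $m-1$ with coefficients supported in $\mathrm{supp}(\nabla\chi)$, so one should insert a further cutoff $\chi''\equiv 1$ on $\mathrm{supp}(\nabla\chi)$ before applying $E$ and invoking Rellich compactness. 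With these cosmetic adjustments your argument is complete and matches the literature the paper cites.
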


\begin{thm}\label{thmA3}
Let $P$ be a differential operator of order $m$ on $\Omega$, verifying $P^\ast=P$, and let $\{u_n\}$ be a bounded sequence in $L_{loc}^2(\mathcal{O})$ which converges weakly to $0$ and it admits a m.d.m. $\mu$. Let us assume that $P u_n \underset{n\rightarrow +\infty}\longrightarrow 0$ strongly in $H_{loc}^{1-m}(\mathcal{O})$. Then, for all function $a\in C^\infty(\mathcal{O} \times \mathbb{R}^n \backslash\{0\} )$ homogeneous of degree $1-m$ in the second variable and with compact support in the first one,
\begin{eqnarray}\label{M26}
\int_{\Omega \times s^{n-1}}\{a,p\}(x,\xi)\,d\mu(x,\xi)=0.
\end{eqnarray}
\end{thm}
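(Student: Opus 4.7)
\textbf{Proof plan for Theorem \ref{thmA3}.} The strategy is the classical commutator argument used by G\'erard in the theory of microlocal defect measures (and by Tartar for H-measures): quantize $a$ to a pseudo-differential operator $A$ of order $1-m$ with principal symbol $a$, and evaluate the sesquilinear form $([P,A]u_n,u_n)_{L^2}$ in two different ways. On the one hand the commutator is a ps.d.o.\ of order $0$ whose principal symbol is essentially $\{a,p\}$, so Theorem \ref{thmA1} identifies the limit with the integral we want. On the other hand the self-adjointness $P^*=P$ together with the strong convergence $Pu_n\to 0$ in $H^{1-m}_{loc}$ forces this same limit to vanish.

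Concretely, I would first choose $A$ properly supported with $\sigma_{1-m}(A)=a$ (this is possible because $a$ has compact $x$-support), and set $B:=\frac{1}{i}[P,A]$. Standard symbolic calculus gives that $B$ is a ps.d.o.\ of order $m+(1-m)-1=0$ with principal symbol
\[
\sigma_0(B)=\frac{1}{i}\cdot\frac{1}{i}\{p,a\}=-\{p,a\}=\{a,p\},
\]
which is homogeneous of degree $0$ in $\xi$, hence well-defined on $\mathcal{O}\times S^{n-1}$. Because $P=P^*$, one checks that $B^*=B$ modulo an operator of order $-1$, so Theorem \ref{thmA1} (with a cutoff $\chi\in C^\infty_c(\mathcal{O})$ equal to $1$ on the $x$-support of $a$) gives
\[
(Bu_n,u_n)_{L^2}\;\longrightarrow\;\int_{\mathcal{O}\times S^{n-1}}\{a,p\}(x,\xi)\,d\mu(x,\xi).
\]

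The second evaluation exploits the hypothesis $Pu_n\to 0$ in $H^{1-m}_{loc}(\mathcal{O})$. Expanding the commutator and using $P^*=P$,
\[
i(Bu_n,u_n)_{L^2}=([P,A]u_n,u_n)_{L^2}=(Au_n,Pu_n)_{L^2}-(APu_n,u_n)_{L^2}.
\]
For the first term, $Au_n$ is bounded in $H^{m-1}_{loc}$ because $A$ has order $1-m$ and $u_n$ is bounded in $L^2_{loc}$, while $Pu_n\to 0$ in $H^{1-m}_{loc}=(H^{m-1}_{loc})^{*}$; the duality pairing therefore tends to $0$. For the second term, $A$ sends $H^{1-m}_{loc}$ continuously into $L^2_{loc}$, so $APu_n\to 0$ in $L^2_{loc}$, and its $L^2$-pairing with the $L^2_{loc}$-bounded sequence $u_n$ also tends to $0$. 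Combining the two evaluations,
\[
\int_{\mathcal{O}\times S^{n-1}}\{a,p\}\,d\mu=0,
\]
which is the announced identity.

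\textbf{Expected main obstacle.} The delicate point is keeping the symbolic bookkeeping clean enough to apply Theorem \ref{thmA1} rigorously: one must verify that $B=\frac{1}{i}[P,A]$ is self-adjoint to leading order (so that the scalar pairing does converge against $\mu$ rather than against some other bilinear object), and handle the case of complex-valued $a$ by replacing $A$ with $\frac{1}{2}(A+A^*)$ (whose principal symbol is $\mathrm{Re}\,a$) and splitting $a=\mathrm{Re}\,a+i\,\mathrm{Im}\,a$. The remaining issue, namely that $u_n$ is only bounded in $L^2_{loc}$ and not globally, is absorbed by the compactly supported cutoff $\chi$ that appears in the very statement of Theorem \ref{thmA1}, so every pairing above is legitimate on the compact set containing $\mathrm{supp}_x(a)$.
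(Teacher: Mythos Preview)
The paper does not actually prove Theorem \ref{thmA3}: it is stated without proof in the Appendix as background material borrowed from G\'erard \cite{LT8} and Burq--G\'erard \cite{LT12.}. Your commutator argument is exactly the standard proof given in those references, and it is correct; there is nothing to compare against in the present paper.
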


Next, Let us consider the wave operator in the following general setting:
\begin{eqnarray*}
	\rho(x) \partial_t^2 - \sum_{i,j=1}^n \partial_{x_i}\left[K(x) \partial_{x_j}\right].
\end{eqnarray*}

Using the notation $D_j=\frac{1}{i}\partial_j$ we can write
\begin{eqnarray*}
	P(t, x, D_t, D_{x})= - \rho(x) D_t^2 +   \sum_{i,j=1}^n D_{x_i}[K(x)D_{x_j}],\quad D_x=(D_{x_1},\cdots, D_{x_n}),
\end{eqnarray*}
whose principal symbol $p(t,x,\tau,\xi)$ is given by
\begin{eqnarray}\label{5.5}
p(t,x,\tau,\xi)=-\rho(x) \tau^2 + K(x) \,\xi\cdot \xi,\quad\xi=(\xi_1,\cdots,\xi_n),
\end{eqnarray}
where $t\in \mathbb{R}$,~ $x\in \Omega\subset \mathbb{R}^d$,~ $(\tau,\xi)\in \mathbb{R}\times \mathbb{R}^d$.

\begin{prp}\label{prpA4}
	Unless a change of variables, the bicharacteristics of \eqref{5.5} are curves of the form
	\begin{eqnarray*}
		t \mapsto \left(t, x(t), \tau, -\tau\left(\frac{K(x(t))}{\rho(x(t))}\right)^{-1}\dot{x}(t) \right),
	\end{eqnarray*}
	where $t \mapsto x(t)$ is a geodesic of the metric $G=\left(K/\rho\right)^{-1}$ on $\Omega$, parameterized by the curvilinear abscissa.
\end{prp}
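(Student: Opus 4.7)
The plan is to apply Hamilton's equations of motion to the principal symbol $p(t,x,\tau,\xi)=-\rho(x)\tau^{2}+K(x)\xi\cdot\xi$ and then to identify the $(x,\xi)$-dynamics on the characteristic set $\{p=0\}$ with the co-geodesic flow of the Riemannian metric $G=(K/\rho)^{-1}$. First I would write Hamilton's equations,
\begin{equation*}
\dot t=\partial_{\tau}p=-2\rho(x)\tau,\qquad \dot\tau=-\partial_{t}p=0,\qquad \dot x_{i}=\partial_{\xi_{i}}p=2(K(x)\xi)_{i},\qquad \dot\xi_{i}=-\partial_{x_{i}}p.
\end{equation*}
Conservation of $\tau$ follows immediately from the $t$-independence of $p$. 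On the nonzero part of the characteristic set, $\tau\neq 0$ (otherwise $K(x)\xi\cdot\xi=0$ forces $\xi=0$), so $\dot t\neq 0$ and the bicharacteristic can be reparametrized by $t$.

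Next I would derive the fiber formula. Dividing $\dot x_{i}$ by $\dot t$ gives $dx^{i}/dt=-(K(x)\xi)_{i}/(\rho(x)\tau)$, which rearranges to $K(x)\xi=-\rho(x)\tau\,\dot x(t)$, equivalently
\begin{equation*}
\xi=-\tau\,(K(x)/\rho(x))^{-1}\dot x(t)=-\tau\,G(x)\dot x(t),
\end{equation*}
which is exactly the fiber coordinate announced in the statement.

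To establish that $t\mapsto x(t)$ is a geodesic of $G$, I would rescale to the conformally equivalent symbol $q:=p/\rho=-\tau^{2}+G^{-1}(x)\xi\cdot\xi$. On the common characteristic set $\{p=q=0\}$ the Hamilton flow of $p$ is only a reparametrization of that of $q$, and the $(x,\xi)$-projection of the flow of $q$ coincides with the Hamiltonian flow of $H_{G}(x,\xi):=\tfrac{1}{2}G^{ij}(x)\xi_{i}\xi_{j}$ on $T^{*}\Omega$ (the $-\tau^{2}$ summand contributes only to $\dot t$). This is by definition the co-geodesic flow of $G$, and projecting to the base gives a geodesic. Finally, the arc-length parameterization comes from the constraint $p=0$, i.e.\ $\rho(x)\tau^{2}=K(x)\xi\cdot\xi$. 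Substituting $\xi=-\tau G(x)\dot x(t)$ and using the identity $G^{T}KG=\rho\,G$ (immediate from $G=\rho K^{-1}$ and symmetry of $G$), the right-hand side becomes $\tau^{2}\rho\,G(\dot x,\dot x)$, whence $G(\dot x,\dot x)=1$.

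I expect no genuine obstacle: this is the classical identification of the Hamilton flow of a quadratic symbol with a reparametrized co-geodesic flow. The only care required is bookkeeping of the reparametrization from the Hamilton-flow parameter to $t$, and the algebraic identity $G^{T}KG=\rho\,G$ that produces the arc-length normalization cleanly.
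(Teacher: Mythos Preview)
Your proof is correct and follows the standard route: write Hamilton's equations for the symbol $p$, use the $t$-independence to conserve $\tau$, reparametrize by $t$ (legitimate since $\dot t=-2\rho\tau\neq 0$ on the nonzero characteristic set), solve the $\dot x$ equation for $\xi$ to get the fiber expression, and then identify the $(x,\xi)$-dynamics with the co-geodesic flow of $G=(K/\rho)^{-1}$ via the conformal rescaling $q=p/\rho$. The arc-length verification using $G^{T}KG=\rho G$ is clean and correct.

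There is nothing to compare against: the paper does not supply its own proof of this proposition. Proposition~\ref{prpA4} is placed in the appendix under the heading ``Microlocal Analysis Background'' with the explicit remark that the results there are merely \emph{announced} from Burq--G\'erard and G\'erard, to be used as tools in the stabilization argument. So your write-up actually fills a gap the paper leaves to the literature, and does so by exactly the computation one finds in those references.
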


The main result is the following:

 \begin{thm}\label{thmA5}
 Let $P$ be a self-adjoint differential operator of order $m$ on $\mathcal{O}$ which admits a principal symbol $p$. Let $\{u_n\}_n$ be a bounded sequence in $L_{loc}^2(\mathcal{O})$ which converges weakly to zero, with a microlocal defect measure $\mu$. Let us assume that $P u_n$ converges to $0$ in $H_{loc}^{-(m-1)}(\mathcal{O})$. Then the support of $\mu$, $\hbox{supp}(\mu)$, is a union of curves like $s\in I \mapsto \left(x(s), \frac{\xi(s)}{|\xi(s)|}\right)$, where $s\in I \mapsto (x(s),\xi(s))$ is a bicharacteristic of $p$.
 \end{thm}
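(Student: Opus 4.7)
The plan is to deduce Theorem \ref{thmA5} from the two preceding ingredients of the excerpt, Theorem \ref{thmA2} and Theorem \ref{thmA3}, together with a classical flow-invariance argument for Radon measures. First, I would invoke Theorem \ref{thmA2}: since $Pu_n\to 0$ in $H^{-(m-1)}_{loc}(\mathcal O)$, \textit{a fortiori} $Pu_n\to 0$ in $H^{-m}_{loc}(\mathcal O)$, and therefore
$$
\mathrm{supp}(\mu)\subset\{(x,\xi)\in\mathcal O\times S^{d-1}:\ p(x,\xi)=0\},
$$
so $\mu$ is already concentrated on the characteristic variety.

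Next, I would exploit Theorem \ref{thmA3}, which turns the stronger hypothesis $Pu_n\to 0$ in $H^{1-m}_{loc}$ into the weak commutation identity
$$
\int_{\mathcal O\times S^{d-1}}\{a,p\}(x,\xi)\,d\mu(x,\xi)=0
$$
for every $a\in C^\infty(\mathcal O\times(\mathbb R^d\setminus\{0\}))$ compactly supported in $x$ and homogeneous of degree $1-m$ in $\xi$. Since $\{a,p\}=H_p a$, the Hamiltonian vector field of $p$, this says that the pushforward of $\mu$ (lifted to $T^*\mathcal O\setminus 0$ as a homogeneous measure of the appropriate degree, using that $p$ is homogeneous of degree $m$) is annihilated by $H_p$ on test functions of the admissible homogeneity class. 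In other words, $H_p\mu=0$ in the sense of distributions on $\{p=0\}$.

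From this invariance the conclusion is standard. I would approximate the indicator of an arbitrary open neighborhood of a point $\mu_0=(x_0,\xi_0/|\xi_0|)\notin\mathrm{supp}(\mu)$ by nonnegative test symbols and transport them along the flow of $H_p$: if $\varphi_s$ denotes the Hamiltonian flow (or rather its projection on $\mathcal O\times S^{d-1}$ via Proposition \ref{prpA4}), the identity $\int H_p a\,d\mu=0$ implies that $s\mapsto \int(a\circ\varphi_s)\,d\mu$ is constant, hence vanishing near $s=0$ forces it to vanish for all $s$ in the interval of existence. Consequently, the complement of $\mathrm{supp}(\mu)$ in $\{p=0\}$ is saturated by bicharacteristics, and dually $\mathrm{supp}(\mu)$ is a union of maximal bicharacteristic curves $s\mapsto(x(s),\xi(s)/|\xi(s)|)$, which is exactly the assertion.

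The main obstacle is purely technical and concerns homogeneity: the measure $\mu$ lives on the sphere bundle $\mathcal O\times S^{d-1}$, while the Hamiltonian flow of $p$ naturally acts on $T^*\mathcal O\setminus 0$. One must check that the test symbols of degree $1-m$ allowed by Theorem \ref{thmA3} form a large enough class to pair against arbitrary compactly supported functions on the sphere bundle, and that the projection of the bicharacteristics in Proposition \ref{prpA4} onto $\mathcal O\times S^{d-1}$ is well defined and coincides with the curves in the conclusion. Once this homogeneity bookkeeping is carried out, the flow-invariance argument goes through verbatim.
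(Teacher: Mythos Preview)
The paper does not actually prove Theorem~\ref{thmA5}: it is stated in the Appendix as a background result taken from Burq--G\'erard \cite{LT12.} and G\'erard \cite{LT8}, with no proof given. So there is no ``paper's own proof'' to compare against.

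That said, your sketch is precisely the standard argument one finds in those references. The two steps---localization of $\mu$ on $\{p=0\}$ via Theorem~\ref{thmA2}, and then the transport identity $\int\{a,p\}\,d\mu=0$ from Theorem~\ref{thmA3}---are exactly how G\'erard proceeds, and the passage from $H_p\mu=0$ to ``$\mathrm{supp}(\mu)$ is a union of projected bicharacteristics'' is the routine flow-invariance argument you describe. Your caveat about the homogeneity bookkeeping (lifting from the sphere bundle to $T^*\mathcal O\setminus 0$ and checking that symbols of degree $1-m$ suffice) is the only genuinely delicate point, and it is handled in \cite{LT8} by extending $\mu$ to a homogeneous measure on the cotangent bundle and noting that $\{a,p\}$ restricted to the sphere, with $a$ ranging over symbols of degree $1-m$, generates all smooth functions tangent to the bicharacteristic foliation on $\{p=0\}\cap S^{d-1}$. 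Once that is in place, your argument is complete and matches the cited sources.
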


\end{document}